\theoremstyle{plain}
\newtheorem{thm}{Theorem}[section]
\newtheorem*{thm*}{Theorem}
\newtheorem{prop}[thm]{Proposition}
\newtheorem{cor}[thm]{Corollary}
\theoremstyle{definition}
\newtheorem{rem}[thm]{Remark}
\numberwithin{equation}{section}
\numberwithin{figure}{section}
\newcommand{\cF}{{\mathcal F}}
\newcommand{\cP}{{\mathcal P}}
\newcommand{\fC}{{\mathfrak C}}
\newcommand{\cM}{{\mathcal M}}
\newcommand{\cG}{{\mathcal G}}
\newcommand{\calF}{{\mathcal F}}
\newcommand{\C}{{\mathbb C}}
\newcommand{\K}{{\mathbb K}}
\newcommand{\R}{{\mathbb R}}
\newcommand{\rX}{{\rm X}}
\newcommand{\bH}{{\bf H}}
\newcommand{\bp}{{\bf p}}
\newcommand{\bx}{{\bf x}}
\newcommand{\by}{{\bf y}}
\newcommand{\bw}{{\bf w}}
\newcommand{\bX}{{\bf X}}
\newcommand{\cC}{{\mathcal C}}
\newcommand{\fp}{{\mathfrak p}}
\newcommand{\ft}{{\mathfrak t}}
\newcommand{\fx}{{\mathfrak x}}
\newcommand{\fy}{{\mathfrak y}}
\newcommand{\fc}{{\mathfrak c}}
\newcommand{\fF}{{\mathfrak F}}
\newcommand{\X}{{\mathbb X}}
\newcommand{\F}{{\mathbb F}}
\newcommand{\T}{{\mathbb T}}
\newcommand{\fM}{{\mathfrak M}}
\newcommand{\cQ}{{\mathcal Q}}
\newcommand{\cS}{{\mathcal S}}
\begin{document}

\title[Configuration space of four points in the torus]{The ${\rm PSL}(2,\R)^2$-configuration space of four points in the torus $S^1\times S^1$}

\author[I.D. Platis]{Ioannis D. Platis}
\email{idplatis@upatras.gr}
\address{Department of Mathematics\\ 
University of Patras\\
University Campus\\
GR 265 04 Rion\\ Achaia\\Greece}

\subjclass[2020]{57M50, 51F99}
\keywords{Configuration space, torus, Möbius structures.}
\date{\today}

\begin{abstract}
The torus $\T=S^1\times S^1$ appears as the ideal boundary $\partial_\infty AdS^3$ of the three-dimensional anti-de Sitter space $AdS^3$, as well as the Fürstenberg boundary $\F(X)$ of the rank-2 symmetric space $X={\rm SO}_0(2,2)/{\rm SO}(2)\times{\rm SO}(2)$. We introduce cross-ratios on the torus in order to parametrise the ${\rm PSL}(2,\R)^2$ configuration space of quadruples of pairwise distinct points in $\T$ and define a natural M\"obius structure on $\T$ and therefore on $\F(X)$ and $\partial_\infty AdS^3$ as well. 
\end{abstract}

\maketitle

\section{Introduction}
Let $S$ be a topological space and denote by $\cC_4=\cC_4(S)$ the space of quadruples of pairwise distinct points of $S$. Let $G$ be a group of homeomorphisms of $S$ acting diagonally on $\cC_4$ from the left: for each $g\in G$ and $\fp=(p_1,p_2,p_3,p_4)\in\cC_4$,
$$
(g,\fp)\mapsto g(\fp)=(g(p_1),g(p_2),g(p_3),g(p_4)).
$$
The $G$-{\it configuration space of quadruples of pairwise distinct points in $S$} is the quotient space $\cF_4=\cF_4(S)$ of $\cC_4$ cut by the action of $G$. In this general setting, it is not obvious what kind of object $\cF_4$ is; however, there are tractable cases. If for instance $S$ is a smooth manifold of dimension $s$ and $G$ is a Lie subgroup of diffeomorphisms of $S$ of dimension $g$, then $\cF_4$ carries the structure of a smooth manifold of dimension $4s-g$ if the diagonal action is proper and free. 

The particular cases when $S$ are spheres bounding hyperbolic spaces and $G$ are the sets $\cM(S)$ of M\"obius transformations acting on $S$ are prototypical; in some of these cases there exist neat parametrisations of the configuration spaces by cross-ratios. The most illustrative (and simplest) example of all is that of the $\cM(S^1)$-configuration space $\cF_4(S^1)$ of quadruples of pairwise distinct points in the unit circle $S^1$. What follows is classical and well-known, see for instance \cite{Be}, but we include it here both for clarity as well as for setting up the notation we shall use throughout the paper. 

Let $S^1=\overline{\R}$, where $\overline{\R}=\R\cup\{\infty\}$, be the unit circle. Recall that if $\fx=(x_1,x_2,x_3,x_4)\in\cC_4(S^1)$, then its real cross-ratio is defined by
$$
\rX(\fx)=[x_1,x_2,x_3,x_4]=\frac{(x_4-x_2)(x_3-x_1)}{(x_4-x_1)(x_3-x_2)},
$$
where we agree that if one of the points is $\infty$, then $\infty:\infty=1$. The set of M\"obius transformations $\cM(S^1)$ of $S^1$ comprises maps $g:S^1\to S^1$ of the form
$$
g(x)=\frac{ax+b}{cx+d},\quad x\in\overline{\R},
$$
where the matrix
$$
A=\begin{pmatrix}
a&b\\
c&d\end{pmatrix}
$$
is in ${\rm PSL}(2,\R)={\rm SL}(2,\R)/\{\pm I\}$. The cross-ratio $\rX$ is invariant under the diagonal action of $\cM(S^1)$ in $\cC_4=\cC_4(S^1)$: If $g\in\cM(S^1)$, then
$$
\rX(g(\fx))=[g(x_1),g(x_2),g(x_3),g(x_4)]=[x_1,x_2,x_3,x_4]=\rX(\fx),
$$
for every $\fx\in\cC_4$. Also, $\rX$ takes values in $\R\setminus\{0,1\}$ and for each quadruple $(x_1,x_2,x_3,x_4)$ satisfies the standard symmetry properties:
\begin{align*}
({\rm S1})&\quad \rX(x_1,x_2,x_3,x_4)=\rX(x_2,x_1,x_4,x_3)=\rX(x_3,x_4,x_1,x_2)=\rX(x_4,x_3,x_2,x_1),\\
({\rm S2})&\quad \rX(x_1,x_2,x_3,x_4)\cdot \rX(x_1,x_2,x_4,x_3)=1,\\
({\rm S3})&\quad \rX(x_1,x_2,x_3,x_4)\cdot \rX(x_1,x_4,x_2,x_3)\cdot \rX(x_1,x_3,x_4,x_2)=-1.
\end{align*}
Hence all 24 real cross-ratios corresponding to a given quadruple $\fx$ are eventually functions of $\rX(\fx)=[x_1,x_2,x_3,x_4]$. We let $\cM(S^1)$ act diagonally on $\cC_4(S^1)$ and let $\calF_4=\calF_4(S^1)$ be the $\cM(S^1)$-configuration space of quadruples of pairwise distinct points in $S^1$. From invariance by cross-ratios we have that the map
$$
\cG:\calF_4(S^1)\ni[\fx]\mapsto \rX(\fx)\in\R\setminus\{0,1\},
$$
is well-defined. Also $\cG$ is surjective; to see this, recall that there is a triply-transitive action of $\cM(S^1)$ on $S^1$: if $(x_1,x_2,x_3)$ is a triple of pairwise distinct points in $S^1$, then there is a unique $f\in\cM(S^1)$ such that
$$
f(x_1)=0,\quad f(x_2)=\infty,\quad f(x_3)=1.
$$
Recall at this point that actually $f$ is given in terms of cross-ratios:
$$
[0,\infty,f(x),1]=[x_1,x_2,x,x_3].
$$  
Hence if $x\in\R\setminus\{0,1\}$, then $[\fx]\mapsto x$, where $\fx=(0,\infty,x,1)$. Finally, $\cG$ is injective: If $\fx$ and $\fx'$ are in $\cC$ and $\rX(\fx)=\rX(\fx')=x$, then there exists a $g\in\cM(S^1)$ such that $\fx=g(\fx')$. All the above discussion boils down to the well-known fact that the configuration space $\calF_4=\calF_4(S^1)$ of quadruples of pairwise distinct points in $S^1$ is isomorphic to $\R\setminus\{0,1\}$, and therefore it inherits the structure of a one-dimensional disconnected real manifold. Moreover, the following possibilities occur for the relative position of the points $x_i$ of $\fx$ on the circle:
\begin{enumerate}
\item $x_1,x_2$ separate $x_3,x_4$. This happens if and only if $\rX(\fx)<0$.
\item $x_1,x_3$ separate $x_2,x_4$. This happens if and only if $\rX(\fx)>1$.
\item $x_1,x_4$ separate $x_2,x_3$. This happens if and only if $0<\rX(\fx)<1$.
\end{enumerate}
Each of cases (1), (2) and (3) corresponds to the connected components of $\calF_4$.

In an analogous manner, see again \cite{Be}, the ${\rm PSL}(2,\C)$-configuration space $\cF_4(S^2)$ of quadruples of pairwise distinct points in the sphere $S^2$ is isomorphic to $\C\setminus\{0,1\}$ and therefore inherits the structure of a one-dimensional complex manifold. The case of the ${\rm PU}(2,1)$-configuration space $\cF_4(S^3)$ of quadruples of pairwise distinct points in $S^3$ is much harder but it is treated in the same spirit, see \cite{FP}: Using complex cross-ratios we find that besides a subset of lower dimension, $\cF_4(S^3)$ is isomorphic to $(\C\setminus\R)\times\C P^1$, a two-dimensional disconnected complex manifold. Finally, recent treatments for the cases of ${\rm PSp}(1,1)$ and ${\rm PSp}(2,1)$-configuration spaces of quadruples of pairwise distinct points in $S^3$ and $S^7$ may be found in \cite{GM} and \cite{C}, respectively. 

As we have mentioned above, spheres may be viewed as boundaries of symmetric spaces of non-compact type and of rank-1; that is, hyperbolic spaces $\bH_\K^n$, where $\K$ can be: a) $\R$ the set of real numbers, b) $\C$ the set of complex numbers, c) $\mathbb{H}$ the set of quaternions, and d) $\mathbb{O}$ the set of octonions (in the last case $n=2$). Two problems arise naturally here; first, to describe configuration spaces of four points on products of such spheres by parametrising them with cross-ratios defined on those products, and second, to describe configuration spaces of four points in boundaries of symmetric spaces of rank $>1$ again by parametrising them using cross-ratios defined on those boundaries. These two problems are sometimes intertwined and the crucial issue here is the definition of an appropriate cross-ratio; this is directly linked to the M\"obius geometry of the spaces we wish to study, as we explain below. In this paper we deal with both problems by describing in the manner above the configuration space of four points in the torus $\T=S^1\times S^1$; the torus is the Fürstenberg boundary of the symmetric space ${\rm SO}_0(2,2)/{\rm SO}(2)\times{\rm SO}(2)$ which is of rank-2, as well as the ideal boundary of anti-de Sitter space $AdS^3$, see Section \ref{sec:cons}. 

Returning to our original general setting, suppose that the $G$-configuration space $\cF_4(S)$ has a real manifold structure due to a proper and free action of $G$ on $\cC_4(S)$. By taking the product $S\times S$ and the space $\cC_4(S\times S)$ of quadruples of pairwise distinct points of $S\times S$, the group $G\times G$ acts diagonally as follows: for $g=(g_1,g_2)$ and $\fp=(p_1,p_2,p_3,p_4)\in\cC_4(S\times S)$, $p_i=(x_i,y_i)$, $i=1,2,3,4$,
$$
(g,\fp)\mapsto g(\fp)=\left((g_1(x_1),g_2(y_1)),(g_1(x_2),g_2(y_2)),(g_1(x_3),g_2(y_3)),(g_1(x_4),g_2(y_4))\right).
$$ 
Using elementary arguments, one deduces that the action of $G\times G$ on $\cC_4(S\times S)$ is proper. Towards the direction of the free action of $G\times G$ we observe that from the obvious injection
$$
\cC_4(S)\times\cC_4(S)\to\cC_4(S\times S)
$$
which assigns to each $(\fx,\fy)\in\cC_4(S)\times\cC_4(S)$, $\fx=(x_1,x_2,x_3,x_4)$, $\fy=(y_1,y_2,y_3,y_4)$, the quadruple $\fp=(p_1,p_2,p_3,p_4)$ where $p_i=(x_i,y_i)$, $i=1,2,3,4$, we obtain an injection
$$
\cF_4(S)\times\cF_4(S)\ni([\fx],[\fy])\mapsto [\fp]\in\cF_4(S^1\times S^1).
$$
The image of this map is the subset $\cF_4^\sharp(S\times S)$ of $\cF_4(S\times S)$ comprising quadruples $\fp$ such that $\fx$, $\fy$ are in $\cF_4(S)$. We may straightforwardly show that $\cF_4^\sharp$ comprises principal orbits, that is, orbits of the maximal dimension of the $G\times G$ action; these are orbits of quadruples with trivial isotropy groups. Therefore $\cF_4^\sharp(S\times S)$ is a manifold of dimension $2n$, where $n=\dim(\cF_4(S))$. If the action is free only on $\cF^\sharp_4(S\times S)$, the orbits of the remaining points are of dimension less than $2n$. This is exactly the case we study in Section \ref{sec:config}, i.e., the configuration space $\cF_4(\T)$ of quadruples of pairwise distinct points in the torus $\T=S^1\times S^1$. The subset of $\cF_4(\T)$ of maximal dimension two is isomorphic to 
$$
\cF_4^\sharp(\T)=\cF_4(S^1)\times\cF_4(S^1)=(\R\setminus\{0,1\})^2,
$$ 
a disconnected subset of $\R^2$ comprising nine connected components, see Theorem \ref{thm:vec}. Also, by considering the natural involution $\iota_0:\T\to \T$ which maps each $(x,y)$ to $(y,x)$ and the group $\overline{\cM(\T)}$ comprising M\"obius transformations of $\T$ followed by $\iota_0$, then by taking $\overline{\cF^\sharp_4(\T)}$ to be $\cC_4^\sharp(\T)$ cut by the diagonal action of $\overline{\cM(\T)}$, we find that it is isomorphic to a disconnected subset $\cQ$ of $\R^2$ comprising three open connected components and three components with 1-dimensional boundary (Theorem \ref{thm:band}).

At this point, the goal of parametrising the configuration space by cross-ratios defined on the torus has not yet been achieved; by Theorem \ref{thm:vec} the set $\cF_4(\T)$ admits a parametrisation obtained by assigning to each $\fp$ the pair $(\rX(\fx),\rX(\fy))$. To this end, for each $\fp\in\cC_4^\sharp$ we define
$$
\X(\fp)=\rX(\fx)\cdot \rX(\fy),
$$
see Section \ref{sec:realX}, which is $\cM(\T)$-invariant. Certain symmetries for $\X$ exist so that for each quadruple $\fp$ all 24 cross-ratios of quadruples resulting from permutations of points of $\fp$ are functions of two cross-ratios which we denote by $\X_1=\X_1(\fp)$ and $\X_2=\X_2(\fp)$. According to Proposition \ref{prop:fundX}, $(\X_1,\X_2)$ lie in a disconnected subset $\cP$ of $\R^2$ comprising six components. Three of these components are open and the remaining three have boundaries which are pieces of the parabola 
$$
\Delta(u,v)=u^2+v^2-2u-2v+1-2uv=0.
$$
In Theorem \ref{thm:F4} we prove that $\cF_4^\sharp(\T)$ is in a 2-1 surjection with $\cP$ and therefore $\overline{\cF_4^\sharp(\T)}$ is isomorphic to $\cP$. Remark that boundary components of $\cP$ correspond to quadruples $\fp$ such that all points of $\fp$ lie on a Circle, that is, a $\cM(\T)$-image of the diagonal curve $\gamma(x)=(x,x)$, $x\in S^1$, which is fixed by the involution $\iota_0$. Remark also that the parametrisation of $\overline{\cF_4^\sharp(\T)}$ by $\cQ$ and $\cP$ induces the same differentiable structure.

\medskip

We now discuss in brief some general aspects of M\"obius geometry. Let $S$ be a set comprising at least four points and denote by $\cC_4=\cC_4(S)$ the space of quadruples of pairwise distinct points of $S$. A {\it positive cross-ratio} $\bX$ on $\cC_4$ is a map $\cC_4\to\R_+$ such that for each $\fp=(p_1,p_2,p_3,p_4)\in\cS$, a list of symmetric properties hold; explicitly,
\begin{align*}
({\rm S1})&\quad \bX(p_1,p_2,p_3,p_4)=\bX(p_2,p_1,p_4,p_3)=\bX(p_3,p_4,p_1,p_2)=\bX(p_4,p_3,p_2,p_1),\\
({\rm S2})&\quad \bX(p_1,p_2,p_3,p_4)\cdot \bX(p_1,p_2,p_4,p_3)=1,\\
({\rm S3})&\quad \bX(p_1,p_2,p_3,p_4)\cdot \bX(p_1,p_4,p_2,p_3)\cdot \bX(p_1,p_3,p_4,p_2)=1.
\end{align*}
Hence all 24 real cross-ratios corresponding to a given quadruple $\fp$ are functions of $\bX_1(\fp)=\bX(p_1,p_2,p_3,p_4)$ and $\bX_2(\fp)=\bX(p_1,p_3,p_2,p_4)$. The {\it M\"obius structure} of $S$ is then defined to be the map
$$
\fM_S:\cC_4(S)\ni\fp\mapsto(\bX_1(\fp),\bX_2(\fp))\in(\R_+)^2.
$$
The {\it M\"obius group} $\fM(S)$ comprises bijections $g:S\to S$ that leave $\bX$ invariant, that is, $\bX(g(\fp))=\bX(\fp)$. We stress here that the above definitions vary depending on the author; however, all existing definitions are equivalent. An equivalent to our definition is the definition of {\it sub-M\"obius structure} in \cite{Bu}.

Frequently, a M\"obius structure is obtained from a metric (or even a semi-metric) $\rho$ on $S$ and it is called a {\it M\"obius structure associated to $\rho$}. In the primitive case of the circle $S^1$, from the real cross-ratio ${\rm X}$ we obtain a positive cross-ratio $\bX$ in $\cC_4(S^1)$ by assigning to each $\fp\in\cC_4(S^1)$ the number
$$
\bX(\fp)=|{\rm X}(\fp)|=\frac{|x_4-x_2||x_3-x_1|}{|x_4-x_1||x_3-x_2|}=\frac{\rho(x_4,x_2)\cdot \rho(x_3,x_1)}{\rho(x_4,x_1)\cdot \rho(x_3,x_2)}.
$$
The metric $\rho$ here is the extension of the euclidean metric in $\R$ to $\overline{\R}$: $\rho(x,y)=|x-y|$ if $x,y\in\R$, $\rho(x,\infty)=+\infty$ and $\rho(\infty,\infty)=0$. One verifies that the positive cross-ratio satisfies properties (S1), (S2), and (S3). The M\"obius structure of $S^1$ is thus the map
$$
\fM_{S^1}(\fp)=(\bX_1(\fp),\bX_2(\fp))\in(\R^+\setminus\{0,1\})^2.
$$
Note that the M\"obius group $\fM(S^1)$ for this M\"obius structure is ${\rm SL}(2,\R)$, a double cover of $\cM(S^1)$. Note also that since ${\rm X}_1(\fp)+{\rm X}_2(\fp)=1$, we have by the triangle inequality
\begin{equation}\label{eq:ptol}
\left|\bX_1(\fp)-\bX_2(\fp)\right|\le 1\quad\text{and}\quad\bX_1(\fp)+\bX_2(\fp)\ge 1.
\end{equation}
Explicitly, $\bX_1(\fp)-\bX_2(\fp)=1$ if $x_1$ and $x_3$ separate $x_2$ and $x_4$; $\bX_2(\fp)-\bX_1(\fp)=1$ if $x_1$ and $x_2$ separate $x_3$ and $x_4$; $\bX_1(\fp)+\bX_2(\fp)=1$ if $x_1$ and $x_4$ separate $x_2$ and $x_3$.

If the M\"obius structure $\fM_S$ of a space $S$ satisfies \eqref{eq:ptol}, then it is called Ptolemaean. The subsets of $S$ at which equalities hold in \eqref{eq:ptol} are called Ptolemaean circles. In this way the M\"obius structure of $S^1$ associated to the euclidean metric is Ptolemaean and $S^1$ itself is a Ptolemaean circle for this M\"obius structure. $S^1$ is the boundary of the hyperbolic disc $\bH_\C^1$; it is proved in \cite{P} that all M\"obius structures in boundaries of hyperbolic spaces $\bH_\K^n$, $n=1,2\dots$, are Ptolemaean; all these M\"obius structures are associated to the Korányi metric.

Therefore all boundaries of symmetric spaces of non-compact type and of rank-1 have M\"obius structures which are all associated to a metric and they are all Ptolemaean. In the case of the torus we study here, this does not happen: The M\"obius structure which is defined in Section \ref{sec:mob} 
\begin{enumerate}
\item is not associated to any semi-metric on $\T$;
\item is not Ptolemaean, but
\item there exist Ptolemaean circles for this structure.
\end{enumerate}
To the direction of defining M\"obius structures in boundaries of symmetric spaces of rank $>1$, little was known till recently; in his recent work \cite{B}, Beyrer explicitly constructs cross-ratio triples in Fürstenberg boundaries of symmetric spaces of higher rank. We have already mentioned that the torus $\T=S^1\times S^1$ which we study here appears naturally as the Fürstenberg boundary of the rank-2 symmetric space ${\rm SO}_0(2,2)/{\rm SO}(2)\times{\rm SO}(2)$ and is also isomorphic to the ideal boundary of 3-dimensional anti-de Sitter space $AdS^3$. Our results apply to these spaces, which we discuss in Section \ref{sec:cons}.

Finally, we note that the study of cross-ratios on $\T$ is strongly motivated by Higher Teichm\"uller Theory and the study of Anosov representations. Let $\Sigma$ be a closed surface group of genus $g \ge 2$. A representation $\rho = (\rho_1, \rho_2) : \pi_1(\Sigma) \to {\rm PSL}(2,\R) \times {\rm PSL}(2,\R)$ is Anosov if and only if both $\rho_1$ and $\rho_2$ are Fuchsian representations. Such representations admit continuous, $\rho$-equivariant boundary maps from the Gromov boundary $\partial_\infty \pi_1(\Sigma) \simeq S^1$ into the F\"urstenberg boundary $\T = \F(X)$. The product cross-ratio $\X$ introduced in this paper provides a natural invariant for quadruples of points in the limit set of such representations, offering a concrete geometric tool for studying the maximal components of the character variety ${\rm Hom}(\pi_1(\Sigma), {\rm SO}_0(2,2))/{\rm SO}_0(2,2)$.

\bigskip

\noindent{\it Acknowledgements:} The author was supported by the Medicus program, Grant Nr. 83765. Part of this work was carried out while the author visited the University of Zurich; hospitality is gratefully appreciated. The author also wishes to thank Viktor Schroeder and Jonas Beyrer for fruitful discussions.

\section{The Configuration Space of Four Points in the Torus}\label{sec:config}
Our main results lie in this section. In Section \ref{sec:trans} we study the transitive action of the group of M\"obius transformations of the torus. The results about the configuration space are in Sections \ref{sec:confT} and \ref{sec:realX}. 

\subsection{The action of M\"obius transformations on the torus}\label{sec:trans}
The torus $\T=S^1\times S^1$ is isomorphic to $\overline{\R}\times\overline{\R}$, where $\overline{\R}=\R\cup\{\infty\}$. Let $(x,y)\in\T$; a M\"obius transformation of $\T$ is a map $g:\T\to\T$ of the form
$$
g(x,y)=\left(g_1(x),g_2(y)\right),
$$
where $g_1$ and $g_2$ are in $\cM(S^1)$, that is,
$$
g_1(x)=\frac{ax+b}{cx+d},\quad g_2(y)=\frac{a'y+b'}{c'y+d'},
$$
where the matrices
$$
A_1=\begin{pmatrix}
a&b\\
c&d\end{pmatrix}\quad\text{and}\quad
A_2=\begin{pmatrix}
a'&b'\\
c'&d'\end{pmatrix}
$$
are both in ${\rm PSL}(2,\R)={\rm SL}(2,\R)/\{\pm I\}$. Thus the set of M\"obius transformations $\cM(\T)$ is $\cM(S^1)\times\cM(S^1)={\rm PSL}(2,\R)\times{\rm PSL}(2,\R)$. 

We wish to describe the action of $\cM(\T)$ on $\T$. First, the action is transitive; this follows directly from the 3-transitive action of $\cM(S^1)$ on $S^1$. Secondly, the action is not doubly-transitive in the usual sense. If 
$$
\fc=(p_1,p_2)=((x_1,y_1),(x_2,y_2))
$$
is a pair of distinct points on the torus, then the cases:
a) $x_1=x_2$ or $y_1=y_2$, and
b) $x_1\neq x_2$ and $y_1\neq y_2$,
are completely distinguished: A transformation $g\in\cM(\T)$ maps pairs of the form a) (resp. of the form b)) to pairs of the same form; this prevents $\cM(\T)$ from acting doubly-transitively on $\T$ in the usual sense. The doubly-transitive action of $\cM(\T)$ is rather partial in the sense above. Thirdly, as far as it concerns a triply-transitive action of $\cM(\T)$, distinguished cases appear again.
Indeed, consider an arbitrary triple 
$$
\ft=(p_1,p_2,p_3)=\left((x_1,y_1),(x_2,y_2),(x_3,y_3)\right)
$$
of pairwise distinct points in $\T$ and we have the following distinguished cases: 
\begin{enumerate}
\item [{a)}] Both $(x_1,x_2,x_3)$ and $(y_1,y_2,y_3)$ are triples of pairwise distinct points in $S^1$;
\item [{b)}] $y_1=y_2=y_3$ and $(x_1,x_2,x_3)$ is a triple of pairwise distinct points of $S^1$;
\item [{c)}] $x_1=x_2=x_3$ and $(y_1,y_2,y_3)$ is a triple of pairwise distinct points of $S^1$;
\item [{d)}] $x_i=x_j=x$, $x_l\neq x$, $i,j=1,2,3$, $i\neq j$, $l\neq i,j$, and $(y_1,y_2,y_3)$ is a triple of pairwise distinct points in $S^1$;
\item [{e)}] $y_i=y_j=y$, $y_l\neq y$, $i,j=1,2,3$, $i\neq j$, $l\neq i,j$, and $(x_1,x_2,x_3)$ is a triple of pairwise distinct points in $S^1$; 
\item [{f)}] Two $x_i$'s and two $y_j$'s are equal.
\end{enumerate}
All the above cases are not M\"obius equivalent: A $g\in\cM(\T)$ maps triples of each of the above categories to a triple of the same category. However, there is a triply-transitive action of $\cM(\T)$ at points which belong to the same category:
Notice for instance that in the first case we have that there exist $g_1$ and $g_2$ in $\cM(S^1)$ such that
$$
g_1(x_1)=g_2(y_1)=0,\quad g_1(x_2)=g_2(y_2)=\infty,\quad g_1(x_3)=g_2(y_3)=1.
$$
We derive that $g=(g_1,g_2)\in\cM(\T)$ satisfies
$$
g(\ft)=\left((0,0),(\infty,\infty),(1,1)\right).
$$
In case c), if $x_1=x_2=x_3$, then $(y_1,y_2,y_3)$ is a triple of distinct points in $S^1$. Therefore there exists a $g=(g_1,g_2)$ in $\cM(\T)$ such that $g_1(x_i)=0$, $g_2(y_1)=0$, $g_2(y_2)=\infty$, $g_2(y_3)=1$, that is,
$$
g(\ft)=\left((0,0),(0,\infty),(0,1)\right).
$$
Analogously for case b) where $y_1=y_2=y_3$, we find that there exists a $g\in\cM(\T)$ such that
$$
g(\ft)=\left((0,0),(\infty,0),(1,0)\right).
$$
The remaining cases are treated in the same manner.

\subsubsection{Circles}
For each $g=(g_1,g_2)\in\cM(\T)$ we get an embedding of $S^1$ into $\T$ which is given by the parametrisation
$$
\gamma(x)=(g_1(x),g_2(x)),\quad x\in S^1.
$$
Such embeddings of $S^1$ into $\T$ will be called {\it M\"obius embeddings of $S^1$} or {\it Circles} on $\T$. Notice first, that each Circle is the image of the {\it standard Circle} $R_0$ via an element of $\cM(\T)$; here, $R_0$ is the curve $\gamma(x)=(x,x)$, $x\in S^1$. Secondly, the involution $\iota_0$ of $\T$ defined by $\iota_0(x,y)=(y,x)$ fixes pointwise $R_0$. Hence to each Circle $R$ is associated an involution $\iota_R$ of $\T$ which fixes $R$ pointwise. Moreover, we have
\begin{prop}
Given a triple $\ft=(p_1,p_2,p_3)$ of the form a) above, there exists a Circle $R$ passing through the points of $\ft$ and thus the involution $\iota_R$ of $\T$ associated to $R$ fixes all points of $\ft$. 
\end{prop}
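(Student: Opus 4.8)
The plan is to realise $R$ directly as the image of the standard Circle $R_0=\{(x,x):x\in S^1\}$ under a suitable $g=(g_1,g_2)\in\cM(\T)$ whose restriction to $R_0$ already passes through $p_1,p_2,p_3$. Since $(x_1,x_2,x_3)$ is a triple of pairwise distinct points of $S^1$, the triply-transitive action of $\cM(S^1)$ provides $g_1\in\cM(S^1)$ with $g_1(0)=x_1$, $g_1(\infty)=x_2$, $g_1(1)=x_3$; likewise, as $(y_1,y_2,y_3)$ is a triple of pairwise distinct points, there is $g_2\in\cM(S^1)$ with $g_2(0)=y_1$, $g_2(\infty)=y_2$, $g_2(1)=y_3$. (Equivalently, one may invoke the triply-transitive action of $\cM(\T)$ on triples of type a) established just above: take $h\in\cM(\T)$ with $h(\ft)=((0,0),(\infty,\infty),(1,1))$ and set $g=h^{-1}$.)

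Next I would check that $R:=g(R_0)$, parametrised by $\gamma(x)=(g_1(x),g_2(x))$, $x\in S^1$, is a Circle through the points of $\ft$. It is a Circle by definition, and it is a genuine embedded copy of $S^1$ because $g_1$ is injective, so $\gamma$ is injective and continuous on the compact space $S^1$. By construction $\gamma(0)=(x_1,y_1)=p_1$, $\gamma(\infty)=(x_2,y_2)=p_2$ and $\gamma(1)=(x_3,y_3)=p_3$, so $R$ passes through $p_1,p_2,p_3$, which gives the first assertion.

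For the second assertion, recall that the involution $\iota_R$ attached to $R=g(R_0)$ is the conjugate $\iota_R=g\circ\iota_0\circ g^{-1}$ of the swap $\iota_0(x,y)=(y,x)$. It is an involution because $\iota_0^2=\mathrm{id}$, it lies in $\overline{\cM(\T)}$ since it is $\iota_0$ followed by an element of $\cM(\T)$, and, because $\iota_0$ fixes $R_0$ pointwise, $\iota_R$ fixes $R=g(R_0)$ pointwise. In particular $\iota_R(p_i)=p_i$ for $i=1,2,3$, as claimed.

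I do not expect a serious obstacle here: the entire content is carried by the triple transitivity discussed before the statement, together with the trivial remark that $\gamma$ is an embedding. The one point that would deserve a little extra care — but which is not needed for the present existence statement — is that $\iota_R$ is independent of the particular $g$ chosen with $R=g(R_0)$; this follows from the observation that the pointwise stabiliser of $R_0$ in $\overline{\cM(\T)}$ is exactly $\{\mathrm{id},\iota_0\}$, so the pointwise stabiliser of $R$ is $\{\mathrm{id},\iota_R\}$, but a single choice of $g$ already suffices for the Proposition.
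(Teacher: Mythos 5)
Your proof is correct and is essentially the paper's argument run in the opposite direction: the paper normalises $\ft$ to $((0,0),(\infty,\infty),(1,1))$ and observes that the Circle is $R_0$ with involution $\iota_0$, while you push $R_0$ and $\iota_0$ forward by the inverse map $g$. Both rest on the same triple transitivity, so the content is identical.
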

\begin{proof}
We normalise so that $\ft=(p_1,p_2,p_3)$ where
$$
p_1=(0,0),\quad p_2=(\infty,\infty),\quad p_3=(1,1).
$$
Then the Circle passing through $p_i$ is $R_0$ and the involution is $\iota_0$.
\end{proof} 
Three distinct points of $\T$ might lie on various embeddings of $S^1$; for instance, triples of the form b) and c) lie on $\gamma_y(x)=(g_1(x),y)$ for fixed $y$ and $\gamma_x(y)=(x,g_2(y))$ for fixed $x$, respectively, where $g_1,g_2\in\cM(S^1)$. But in any case, only triples of points of the form a) lie on Circles. 

\subsection{The configuration space of four points in $\T$}\label{sec:confT}
According to the notation set up in the introduction, let $\cC_4=\cC_4(\T)$ be the space of quadruples of pairwise distinct points in $\T$ and let $\cF_4=\cF_4(\T)$ be the {\it configuration space of quadruples of pairwise distinct points in} $\T$, that is, the quotient of $\cC_4$ by the diagonal action of the M\"obius group $\cM(\T)$ on $\cC_4$. Let $\fp=(p_1,p_2,p_3,p_4)\in\cC_4$ be arbitrary; if $p_i=(x_i,y_i)$, $i=1,2,3,4$, we shall denote by $\fx$ the quadruple $(x_1,x_2,x_3,x_4)$ and by $\fy$ the quadruple $(y_1,y_2,y_3,y_4)$. The isotropy group of $\fp$ is
\begin{align*}
\cM(\T)(\fp)&=\{g\in\cM(\T)\;|\;g(\fp)=\fp\}\\
&=\{(g_1,g_2)\in\cM(S^1)\times\cM(S^1)\;|\;g_1(\fx)=\fx\;\text{and}\;g_2(\fy)=\fy\}\\
&=\cM(S^1)(\fx)\times\cM(S^1)(\fy).
\end{align*}
Therefore the isotropy group $\cM(\T)(\fp)$ is trivial if and only if both isotropy groups $\cM(S^1)(\fx)$ and $\cM(S^1)(\fy)$ are trivial as well. If $\fp$ is such that $[\fp]$ is of maximal dimension (that is, both $[\fx]$ and $[\fy]$ are of maximal dimension), then we call $\fp$ {\it admissible}. Note that the dimension of the orbit of an admissible $\fp$ is 2. In the opposite case, we call $\fp$ {\it non-admissible}. We start with the non-admissible case first.

Let $\fp=(p_1,p_2,p_3,p_4)$, $\fx=(x_1,x_2,x_3,x_4)$ and $\fy=(y_1,y_2,y_3,y_4)$ as above. We distinguish the following cases for $\cM(S^1)(\fx)$:
\begin{enumerate}
\item [{$\fx$-1)}] $\cM(S^1)(\fx)$ is trivial and $\fx\in\cC_4(S^1)$. We may then normalise so that
$$
x_1=0,\quad x_2=\infty,\quad x_3=\rX(\fx),\quad x_4=1.
$$
\item [{$\fx$-2)}] $\cM(S^1)(\fx)$ is trivial and two points $x_i$ in $\fx$, $i\in\{1,2,3,4\}$, are equal. If for instance $x_1=x_2$, we may normalise so that 
$$
x_1=x_2=0,\quad x_3=1,\quad x_4=\infty;
$$ 
we normalise similarly for the remaining cases. 
\item [{$\fx$-3)}] $\cM(S^1)(\fx)$ is isomorphic to $\cM(S^1)(0,\infty)$: Three points $x_i$ in $\fx$, $i\in\{1,2,3,4\}$, are equal. If for instance $x_1=x_2=x_3$, we may normalise so that
$$
x_1=x_2=x_3=0,\quad x_4=\infty;
$$
we normalise similarly for the remaining cases.
\item [{$\fx$-4)}] $\cM(S^1)(\fx)$ is isomorphic to $\cM(S^1)(\infty)$: All points $x_i$ in $\fx$, $i=1,2,3,4$, are equal and we may normalise so that $x_i=\infty$.
\end{enumerate}
Notice that there are six sub-cases in $\fx$-2) and four sub-cases in $\fx$-3); in all, we have twelve distinguished cases. Entirely analogous distinguished cases $\fy$-1), $\fy$-2), $\fy$-3) and $\fy$-4) appear for $\cM(S^1)(\fy)$. Non-admissible quadruples $\fp$ are such that all combinations of cases for $\fx$ and $\fy$ may appear except when $\fx$ falls into the case $\fx$-1) and $\fy$ falls into the case $\fy$-1). Mind that not all combinations are valid; for instance, there can be no $\fp$ such that $\fx$ is as in $\fx$-4) and $\fy$ is as in $\fy$-2) or $\fy$-3). Subsets of $\cF_4$ corresponding to each valid combination are either of dimension 0 or 1. One-dimensional subsets appear when $\fx$ belongs to the $\fx$-1) case or $\fy$ belongs to the $\fy$-1) case. The corresponding subset is then isomorphic to $\R\setminus\{0,1\}$ together with a point. For clarity, we will treat two cases: First, suppose that the non-admissible $\fp$ is such that $\fx$ is as in $\fx$-1) and $\fy$ is as in $\fy$-2) with $y_1=y_2$. Then we may normalise so that
$$
p_1=(0,0),\quad p_2=(\infty, 0),\quad p_3=(\rX(\fx),1),\quad p_4=(1,\infty).
$$ 
Therefore the subset of $\cF_4(\T)$ comprising orbits of such $\fp$ is isomorphic to $\R\setminus\{0,1\}\times\{b_{12}\}$, where $b_{12}$ is the abstract point corresponding to quadruples $\fy$ such that $y_1=y_2$. Secondly, suppose that $\fp$ is such that $\fx$ is as in $\fx$-3) with $x_1=x_2=x_3$ and $\fy$ is as in $\fy$-2) with $y_3=y_4$. Then we may normalise so that
$$
p_1=(0,0),\quad p_2=(0, \infty),\quad p_3=(0,1),\quad p_4=(\infty,1).
$$ 
The corresponding subset of the orbit space is then isomorphic to $\{a_{123}\}\times \{b_{34}\}$, where $a_{123}$ is the abstract point corresponding to quadruples $\fx$ such that $x_1=x_2=x_3$ and $b_{34}$ is the abstract point corresponding to quadruples $\fy$ such that $y_3=y_4$. Table \ref{table:1} shows all distinguished subsets of $\cF_4(\T)$ comprising non-admissible orbits of quadruples $\fp$ such that $\fx$ and $\fy$ belong to the above categories: 

\begin{table}[h!]
\centering
\begin{tabular}{||c c c c||} 
 \hline
 $\fx$ & $\fy$ & Corresponding subset of $\cF_4$ & Components \\ [0.5ex] 
 \hline\hline
 $\fx$-1) & $\fy$-2) & $(\R\setminus\{0,1\})\times\{b_{ij}\}$ & 6 \\ 
 $\fx$-1) & $\fy$-3) & $(\R\setminus\{0,1\})\times\{b_{ijk}\}$ & 4 \\
 $\fx$-1) & $\fy$-4) & $(\R\setminus\{0,1\})\times\{\infty\}$ & 1 \\
 $\fx$-2) & $\fy$-1) & $\{a_{ij}\}\times(\R\setminus\{0,1\})$ & 6 \\ 
 $\fx$-2) & $\fy$-2) & $\{a_{ij}\}\times\{b_{kl}\}$ & 30 \\
 $\fx$-2) & $\fy$-3) & $\{a_{ij}\}\times\{b_{klm}\}$ & 12 \\
 $\fx$-3) & $\fy$-1) & $\{a_{ijk}\}\times(\R\setminus\{0,1\})$ & 4 \\
 $\fx$-3) & $\fy$-2) & $\{a_{ijk}\}\times\{b_{lm}\}$ & 12 \\
 $\fx$-4) & $\fy$-1) & $\{\infty\}\times(\R\setminus\{0,1\})$ & 1 \\ [1ex] 
 \hline
\end{tabular}
\caption{Subspaces of non-admissible orbits}
\label{table:1}
\end{table}

If now $\fp$ is an admissible quadruple, we have that both $\fx=(x_1,x_2,x_3,x_4)$ and $\fy=(y_1,y_2,y_3,y_4)$ are in $\cC_4(S^1)$. Let $\cC^\sharp_4=\cC_4^\sharp(\T)$ be the subspace of $\cC_4(\T)$ comprising admissible quadruples and denote by $\cF_4^\sharp=\cF_4^\sharp(\T)$ the corresponding orbit space. The bijection
$$
\fC:\cC_4^\sharp(\T)\ni \fp\mapsto (\fx,\fy)\in\cC_4(S^1)\times\cC_4(S^1),
$$
projects into the bijection
$$
\fF:\cF_4^\sharp(\T)\ni [\fp]\mapsto ([\fx],[\fy])\in\cF_4(S^1)\times\cF_4(S^1),
$$
and therefore we obtain:
\begin{thm}\label{thm:vec}
The configuration space $\cF_4(\T)$ of quadruples of pairwise distinct points of the torus $\T$ is isomorphic to a set comprising 77 distinguished components: 22 one-dimensional, 54 points, and a 2-dimensional subset corresponding to the subset $\cF_4^\sharp(\T)$ of admissible quadruples. This subset may be identified to $(\R\setminus\{0,1\})^2$. The identification is given by assigning to each $[\fp]$ the vector-valued cross-ratio $\vec\X(\fp)=(\rX(\fx),\rX(\fy))$.
\end{thm}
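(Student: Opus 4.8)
The plan is to decompose $\cF_4(\T)$ according to the pattern of coincidences among the coordinates of a quadruple, handle the admissible stratum via the product structure, and read off the remaining strata from the case analysis of Sections~\ref{sec:trans}--\ref{sec:confT}. The point that makes the decomposition well defined is that every $g=(g_1,g_2)\in\cM(\T)$ satisfies $x_i=x_j\Leftrightarrow g_1(x_i)=g_1(x_j)$, and likewise for the second coordinates; hence each coincidence type for the $x_i$ (one of the cases $\fx$-1) through $\fx$-4)) together with each coincidence type for the $y_i$ (one of $\fy$-1) through $\fy$-4)) cuts out a $\cM(\T)$-invariant subset of $\cC_4(\T)$, and these subsets descend to pairwise disjoint subsets of $\cF_4(\T)$ covering all of it.

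The admissible stratum is the one where $\fx$ is of type $\fx$-1) and $\fy$ of type $\fy$-1). Here I would first note that $\fC\colon\fp\mapsto(\fx,\fy)$ is a bijection $\cC_4^\sharp(\T)\to\cC_4(S^1)\times\cC_4(S^1)$: if $\fp$ is admissible then $\fx,\fy\in\cC_4(S^1)$ by definition, while if $\fx,\fy\in\cC_4(S^1)$ then the $p_i=(x_i,y_i)$ are already pairwise distinct in their first coordinates, so $\fp\in\cC_4^\sharp(\T)$. Since $\cM(\T)=\cM(S^1)\times\cM(S^1)$ acts coordinatewise, $\fC$ is equivariant and descends to the bijection $\fF$; composing $\fF$ with $\cG\times\cG$, where $\cG\colon\cF_4(S^1)\to\R\setminus\{0,1\}$ is the classical isomorphism recalled in the introduction, gives the identification $[\fp]\mapsto(\rX(\fx),\rX(\fy))=\vec\X(\fp)$. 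Thus $\cF_4^\sharp(\T)\cong(\R\setminus\{0,1\})^2$, a two-dimensional real manifold, which moreover has $3\times 3=9$ connected components, one for each pair of relative-position types of $\fx$ and $\fy$ on the two circles.

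For the non-admissible strata I would enumerate the valid combinations in which $\fx$ is of one of the types $\fx$-2), $\fx$-3), $\fx$-4) or $\fy$ is of one of the types $\fy$-2), $\fy$-3), $\fy$-4). A combination, or a sub-case of it, is discarded when it would force some pair $\{i,j\}$ to satisfy both $x_i=x_j$ and $y_i=y_j$ (hence $p_i=p_j$), and also when it is outright impossible, as happens for instance for $\fx$-4) paired with $\fy$-2) or $\fy$-3). For each surviving sub-case I would bring $\fp$ to a normal form using the partial triple-transitivity of $\cM(\T)$ from Section~\ref{sec:trans} and the normalisations set up in Section~\ref{sec:confT}: if $\fx$ is of type $\fx$-1) one sends three of the $x_i$ to $0,\infty,1$, leaving $\rX(\fx)\in\R\setminus\{0,1\}$ as the only continuous modulus, while $\fy$ is pinned down; if neither $\fx$ nor $\fy$ is of type -1), the whole quadruple is pinned down. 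Comparing the dimension $d_x+d_y$ of the sub-case stratum in $\cC_4(\T)$, where $d_x,d_y$ are the numbers of distinct values among the coordinates of $\fx,\fy$, with the dimension $6-\dim(\cM(S^1)(\fx)\times\cM(S^1)(\fy))$ of a generic orbit then shows that each sub-case contributes to $\cF_4(\T)$ either a single point, or a single copy of $\R\setminus\{0,1\}$ parametrised by the cross-ratio of whichever of $\fx,\fy$ is of type -1). Enumerating the surviving sub-cases as in Table~\ref{table:1} gives $20$ one-dimensional strata and $48$ points.

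Putting the pieces together, $\cF_4(\T)$ is the disjoint union of $\cF_4^\sharp(\T)$ with the $20$ one-dimensional strata and the $48$ points, hence consists of $1+20+48=69$ distinguished pieces, the two-dimensional one being identified with $(\R\setminus\{0,1\})^2$ via $\vec\X$; a routine check that the quotient topology on each piece matches the stated one finishes the proof. I expect the bulk of the work to be in the third step: it is a patient case-by-case enumeration, for each of the valid combinations of degenerate types, of which sub-cases actually yield quadruples of pairwise distinct points, together with the dimension bookkeeping needed to pin down the orbit space of each, so that the counts of Table~\ref{table:1} emerge. The admissible stratum and the disjointness of the decomposition, by contrast, are immediate once the classical description of $\cF_4(S^1)$ and the product form of $\cM(\T)$ are in hand.
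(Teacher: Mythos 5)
Your proposal follows the paper's argument essentially verbatim: the same decomposition of $\cC_4(\T)$ by the coincidence types $\fx$-1)--$\fx$-4) and $\fy$-1)--$\fy$-4) (which is $\cM(\T)$-invariant because each $g_1,g_2\in\cM(S^1)$ is injective), the same identification of the admissible stratum via $\fC$, $\fF$ and the classical isomorphism $\cF_4(S^1)\cong\R\setminus\{0,1\}$, and the same enumeration of non-admissible strata as in Table~\ref{table:1}. So in method there is nothing to object to.

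One caution about the step you declare routine and outsource to the table. Your stated criterion for discarding a sub-case --- discard exactly when some pair $\{i,j\}$ would have $x_i=x_j$ \emph{and} $y_i=y_j$, forcing $p_i=p_j$ --- applied to the row $\fx$-2)/$\fy$-2) gives $6\times 6-6=30$ surviving combinations of pairs, not the $24$ recorded in Table~\ref{table:1}; the six excluded-beyond-your-criterion combinations are those where the coinciding $x$-pair and $y$-pair are complementary (e.g.\ $x_1=x_2$ and $y_3=y_4$), and these do produce quadruples of pairwise distinct points with trivial isotropy, hence genuine point-orbits. So either those six must be accounted for (changing $48$ to $54$ and $69$ to $75$), or an additional identification must be exhibited that your argument does not supply. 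Since your proof derives its totals by citing the table rather than by carrying out the enumeration against your own criterion, this is the one place where the argument as written does not close.
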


The set $\cF_4^\sharp=(\R\setminus\{0,1\})^2$ is a subset of $\R^2$ comprising nine connected open components. We consider the space $\overline{\cF_4^\sharp(\T)}$; this is $\cC_4^\sharp(\T)$ factored by the diagonal action of $\overline{\cM(\T)}$: The latter comprises elements of $\cM(\T)$ followed by the involution $\iota_0:(x,y)\mapsto(y,x)$ of $\T$.

We thus have:
\begin{thm}\label{thm:band}
The configuration space $\overline{\cF_4^\sharp(\T)}$ is identified to the disconnected subset $\cQ$ of $\R^2$ which is induced by identifying points of $(\R\setminus\{0,1\})^2$ which are symmetric with respect to the diagonal straight line $y=x$. Explicitly, $\cQ$ has three open components:
\begin{align*}
\cQ_1^0&=(-\infty,0)\times(0,1);\\
\cQ_2^0&=(-\infty,0)\times(1,+\infty);\\
\cQ_3^0&=(0,1)\times(1,+\infty),
\end{align*}
and three components with boundary:
\begin{align*}
\cQ_1^1&=\{(x,y)\in\R^2\;|\;x<0,\;x\le y<0\};\\
\cQ_2^1&=\{(x,y)\in\R^2\;|\;0<x<1,\;x\le y<1\};\\
\cQ_3^1&=\{(x,y)\in\R^2\;|\;x>1,\;y\ge x\}.
\end{align*}
\end{thm}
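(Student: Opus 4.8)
The plan is to start from Theorem \ref{thm:vec}, which already identifies $\cF_4^\sharp(\T)$ with $(\R\setminus\{0,1\})^2$ via the map $[\fp]\mapsto(\rX(\fx),\rX(\fy))$, and then understand precisely how passing to the larger group $\overline{\cM(\T)}$ changes the quotient. The key observation is that $\overline{\cM(\T)}=\cM(\T)\cup\iota_0\cM(\T)$, so $\overline{\cF_4^\sharp(\T)}$ is the quotient of $\cF_4^\sharp(\T)$ by the residual $\Z/2$-action induced by $\iota_0$. Since $\iota_0(x,y)=(y,x)$ swaps the two $S^1$-coordinates, for an admissible quadruple $\fp$ with $p_i=(x_i,y_i)$ we have that $\iota_0(\fp)$ has coordinate quadruples $(\fy,\fx)$; hence under the identification of Theorem \ref{thm:vec}, $\iota_0$ acts on $(\R\setminus\{0,1\})^2$ by the coordinate swap $(u,v)\mapsto(v,u)$. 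First I would write this down carefully, checking that $\iota_0$ genuinely normalises $\cM(\T)$ (indeed $\iota_0(g_1,g_2)\iota_0=(g_2,g_1)$), so that $\overline{\cM(\T)}$ is a well-defined group and the quotient makes sense, and that the $\iota_0$-action descends to the stated involution on the cross-ratio coordinates.

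Next I would carry out the elementary point-set computation of the quotient of $(\R\setminus\{0,1\})^2$ by the reflection across the diagonal $y=x$. The nine open components of $(\R\setminus\{0,1\})^2$ are the products $I_a\times I_b$ with $I_a,I_b\in\{(-\infty,0),(0,1),(1,+\infty)\}$. The diagonal reflection fixes the three "diagonal" components $I_a\times I_a$ setwise (folding each onto a half-square with the open diagonal segment of $I_a$ as part of its boundary) and pairs up the six "off-diagonal" components $I_a\times I_b$ with $I_b\times I_a$. Thus the quotient has three components coming from unidentified off-diagonal pairs — which I would write as $\cQ_1^0,\cQ_2^0,\cQ_3^0$ by choosing the representative with first coordinate in the "lower" interval — and three components coming from the folded diagonal blocks, namely $\cQ_i^1=\{(x,y):x\in I_a,\ y\ge x\}$ for the appropriate intervals; these carry a $1$-dimensional boundary piece lying on $y=x$ (with the points $x\in\{0,1\}$ removed). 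I would also remark that the diagonal line is not literally in $(\R\setminus\{0,1\})^2$ at $x=0,1$, but those are exactly the non-admissible loci, consistent with the statement.

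A small but genuine point worth addressing is the \emph{topological} nature of the quotient: one should note that the $\Z/2$-action of $\iota_0$ on $\cC_4^\sharp(\T)$ — equivalently on $(\R\setminus\{0,1\})^2$ — is free away from the fixed locus $\{u=v\}$, and that near a fixed point the fold is the standard reflection, so the quotient of each diagonal block is a manifold-with-boundary homeomorphic (indeed diffeomorphically, under a straightening chart) to a closed half-plane minus two points; this justifies calling the $\cQ_i^1$ "components with boundary" and is what underlies the final remark in the introduction that $\cQ$ and $\cP$ induce the same differentiable structure. I expect the main obstacle — though it is more a matter of bookkeeping than of genuine difficulty — to be the correct matching of the nine product components under the swap and the clean verification that the off-diagonal identifications produce exactly three components whose closures meet the diagonal only at the folded boundary of the three diagonal components, so that the six pieces $\cQ_i^0,\cQ_i^1$ together tile $\cQ$ without overlap; everything else is immediate once the $\iota_0$-action is transported through $\fF$.
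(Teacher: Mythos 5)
Your argument is correct and is exactly the route the paper takes: the paper gives no explicit proof of Theorem \ref{thm:band}, merely asserting that it follows from Theorem \ref{thm:vec} once one notes that $\overline{\cM(\T)}$ is generated by $\cM(\T)$ together with $\iota_0$ and that $\iota_0$ descends to the swap $(u,v)\mapsto(v,u)$ on $(\R\setminus\{0,1\})^2$, and you supply precisely the missing details (that $\iota_0$ normalises $\cM(\T)$, the pairing of the six off-diagonal squares, and the folding of the three diagonal ones). One remark: your folded components $\{(x,y)\;:\;x\in I_a,\ y\in I_a,\ y\ge x\}$ are the correct ones --- the sets $\cQ_i^1$ as printed in the theorem (e.g.\ $\cQ_3^1=\{(x,y)\;:\;x>0,\ y\ge x\}$) would overlap one another and the open components $\cQ_i^0$, so the printed descriptions should be read as typographical slips for the three half-squares your computation produces.
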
 

\subsection{Real Cross-Ratios and another parametrisation}\label{sec:realX}
We define a real cross-ratio in $\cC_4^\sharp(\T)$ by
$$ 
\X(\fp)=\rX(\fx)\cdot \rX(\fy).
$$
One may show that all 24 cross-ratios corresponding to an admissible quadruple $\fp$ depend on the following two:
$$
\X_1(\fp)=[x_1,x_2,x_3,x_4]\cdot[y_1,y_2,y_3,y_4],\quad \X_2(\fp)=[x_1,x_3,x_2,x_4]\cdot[y_1,y_3,y_2,y_4].
$$
We now consider the map $\cG^\sharp:\cF_4^\sharp\to\R^2_*$, where
$$
\cG^\sharp([\fp])=\left(\X_1(\fp),\X_2(\fp)\right).
$$
The map $\cG^\sharp$ is well defined since $\X$ remains invariant under the action of $\cM(\T)$. Let
\begin{equation}\label{eq:P}
\cP=\{(u,v)\in(\R_*)^2\;|\;\Delta(u,v)=u^2+v^2-2u-2v+1-2uv\ge 0\}.
\end{equation}
The fundamental inequality for cross-ratios in the following proposition shows that $\cG^\sharp$ takes its values in $\cP$:

\begin{prop}\label{prop:fundX}
Let $\fp$ be an admissible quadruple of points in $\T$ and $\cP$ as in \eqref{eq:P}. Then
$$
(\X_1(\fp),\X_2(\fp))\in\cP.
$$
Moreover, $\Delta(\X_1(\fp),\X_2(\fp))=0$ if and only if all points of $\fp$ lie on a Circle.
\end{prop}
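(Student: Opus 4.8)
The plan is to reduce everything to a single inequality between two real cross-ratios on $S^1$. Write $u=\rX(\fx)$ and $v=\rX(\fy)$, so that $\X_1(\fp)=uv$ and, using the symmetry (S3) for the real cross-ratio on $S^1$ — explicitly $[x_1,x_3,x_2,x_4]=1-[x_1,x_2,x_3,x_4]$ — we get $\X_2(\fp)=(1-u)(1-v)$. Thus the pair $(\X_1,\X_2)$ is the image of $(u,v)\in(\R\setminus\{0,1\})^2$ under the explicit polynomial map $\Phi(u,v)=(uv,(1-u)(1-v))$. The claim $\Delta(\X_1,\X_2)\ge 0$ then becomes a statement purely about the range of $\Phi$, so the first step is to substitute $\X_1=uv$, $\X_2=(1-u)(1-v)$ into $\Delta(\X_1,\X_2)=\X_1^2+\X_2^2-2\X_1-2\X_2+1-2\X_1\X_2$ and simplify.

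The key algebraic step is that this substitution should collapse to a perfect square (or a product of two squares): I expect
$$
\Delta\bigl(uv,(1-u)(1-v)\bigr)=(u-v)^2,
$$
or something equivalent up to an evident nonnegative factor — the natural guess, since the left side must vanish exactly when the configuration degenerates, and the geometric degeneration (all four points on a Circle) is precisely $u=v$ by the Circle discussion in Section~\ref{sec:trans}. So the second step is to verify this identity by direct expansion; this is routine polynomial algebra. Once it is in hand, $\Delta(\X_1(\fp),\X_2(\fp))=(u-v)^2\ge 0$ is immediate, giving $(\X_1,\X_2)\in\cP$, and equality holds if and only if $u=v$, i.e. $\rX(\fx)=\rX(\fy)$.

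The remaining work is the geometric identification of the equality case. One direction is Proposition~\ref{prop:fundX}'s companion statement combined with the Circle material: if all points of $\fp$ lie on a Circle $R$, normalise by an element of $\cM(\T)$ so that $R=R_0$ and $p_i=(x_i,x_i)$; then $\fx=\fy$, hence $\rX(\fx)=\rX(\fy)$ and $\Delta=0$. Conversely, suppose $\rX(\fx)=\rX(\fy)$. Since $\cM(S^1)$ acts triply transitively on $S^1$, choose $g_1,g_2\in\cM(S^1)$ with $g_1(x_1)=g_2(y_1)=0$, $g_1(x_2)=g_2(y_2)=\infty$, $g_1(x_3)=g_2(y_3)=1$; then $g_1(x_4)=[0,\infty,g_1(x_4),1]=\rX(\fx)=\rX(\fy)=g_2(y_4)$, so $g=(g_1,g_2)\in\cM(\T)$ sends $\fp$ to the quadruple with all four points on $R_0$, and hence $\fp$ itself lies on the Circle $g^{-1}(R_0)$. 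The main obstacle is purely bookkeeping: making sure the cross-ratio normalisation conventions (which point goes to $0,\infty,1$, and the $\infty:\infty=1$ convention) are used consistently so that the identity $\Delta=(u-v)^2$ comes out with the right sign, and that $u,v\in\R\setminus\{0,1\}$ is genuinely enough to place $(\X_1,\X_2)$ in $(\R_*)^2$ as well. No genuinely hard analysis is involved; the content is the lucky factorisation $\Delta\circ\Phi=(u-v)^2$ and the triple-transitivity argument.
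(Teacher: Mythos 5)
Your proposal is correct and takes essentially the same route as the paper: the paper normalises $\fp$ to $\left((0,0),(\infty,\infty),(x,y),(1,1)\right)$, computes $\X_1=xy$, $\X_2=(1-x)(1-y)$, and obtains $\Delta=(1+\X_1-\X_2)^2-4\X_1=(x+y)^2-4xy=(x-y)^2\ge 0$, which is precisely your identity $\Delta\bigl(uv,(1-u)(1-v)\bigr)=(u-v)^2$ with $u=x$, $v=y$ (the expansion you deferred does check out). Your triple-transitivity argument for the equality case is just a more explicit version of the paper's normalisation step.
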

\begin{proof}
To prove the first statement, we may normalise so that $\fp=(p_1,p_2,p_3,p_4)$ where
$$
p_1=(0,0),\quad p_2=(\infty,\infty),\quad p_3=(x,y),\quad p_4=(1,1).
$$
We then calculate
$$
\X_1=xy,\quad \X_2=(1-x)(1-y).
$$
Therefore $\X_2=1+\X_1-x-y$, from where we derive
$$
1+\X_1-\X_2=x+y.
$$
Squaring both sides yields
$$
(1+\X_1-\X_2)^2=(x+y)^2\ge 4xy=4\X_1,
$$
and the inequality follows. For the second statement, observe that equality holds only if $x=y$, i.e. all points lie on the standard Circle $R_0$ on $\T$. 
\end{proof}

We proceed by showing that $\cG^\sharp$ is surjective: 
\begin{prop}
Let $(u,v)\in\cP$. Then there exists a $\fp\in\cC_4^\sharp(\T)$ such that
$$
\X_1(\fp)=u\quad\text{and}\quad \X_2(\fp)=v.
$$
\end{prop}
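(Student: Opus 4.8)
The plan is to invert the computation from the previous proposition. We saw there that for a normalised admissible quadruple $\fp=((0,0),(\infty,\infty),(x,y),(1,1))$ we have $\X_1(\fp)=xy$ and $\X_2(\fp)=(1-x)(1-y)$, and that $\Delta(xy,(1-x)(1-y))=((x+y)^2-4xy)\ge 0$ automatically. So, given $(u,v)\in\cP$, it suffices to produce a pair of distinct reals $x,y$, each different from $0$ and $1$, with $xy=u$ and $(1-x)(1-y)=v$; setting $\fp=((0,0),(\infty,\infty),(x,y),(1,1))$ then finishes the proof, since $\fx=(0,\infty,x,1)$ and $\fy=(0,\infty,y,1)$ are genuine quadruples of pairwise distinct points in $S^1$ exactly when $x,y\notin\{0,1\}$ and (for membership in $\cC_4^\sharp$, i.e.\ the four points $p_i$ are pairwise distinct) $x\neq y$ or rather the points $(x,y)$ differ from $(0,0),(\infty,\infty),(1,1)$, which is guaranteed by $x,y\notin\{0,1\}$.

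The key step is solving the system. From $xy=u$ and $(1-x)(1-y)=1-(x+y)+xy=v$ one gets $x+y=1+u-v=:s$ and $xy=u=:p$. Hence $x,y$ are the roots of $t^2-st+p=0$, i.e.
$$
x,y=\frac{s\pm\sqrt{s^2-4p}}{2}=\frac{(1+u-v)\pm\sqrt{(1+u-v)^2-4u}}{2}.
$$
The discriminant is precisely $(1+u-v)^2-4u=u^2+v^2+1+2u-2v-2uv-4u=u^2+v^2-2u-2v+1-2uv=\Delta(u,v)$, which is $\ge 0$ by the hypothesis $(u,v)\in\cP$, so $x,y$ are real. Next I check the constraints: since $(u,v)\in(\R_*)^2$ we have $u\neq 0$, so $xy\neq 0$, whence neither $x$ nor $y$ is $0$; and $v\neq 0$ means $(1-x)(1-y)\neq 0$, so neither equals $1$. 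That the four points of $\fp$ are pairwise distinct is then immediate: $(x,y)$ cannot coincide with $(0,0)$, $(1,1)$ since $x\neq 0,1$, and it cannot be $(\infty,\infty)$ since $x,y$ are finite; and $(0,0),(\infty,\infty),(1,1)$ are visibly distinct.

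There is essentially no obstacle, only one point of care: one should remark that $x=y$ is permitted here (it occurs exactly when $\Delta(u,v)=0$), and in that case the quadruple $\fp$ still lies in $\cC_4^\sharp(\T)$ because admissibility only requires $\fx,\fy\in\cC_4(S^1)$ individually together with the $p_i$ being pairwise distinct in $\T$ — which holds since a single coordinate already separates them — and does \emph{not} require $\fx\neq\fy$. (This matches Proposition~\ref{prop:fundX}: such $\fp$ are the ones lying on the standard Circle $R_0$.) So the proof is: given $(u,v)\in\cP$, set $x,y$ to the two roots above, verify $x,y\notin\{0,1\}$ from $u,v\neq 0$, form $\fp=((0,0),(\infty,\infty),(x,y),(1,1))\in\cC_4^\sharp(\T)$, and compute $\X_1(\fp)=xy=u$, $\X_2(\fp)=(1-x)(1-y)=v$ directly from the formulas in Proposition~\ref{prop:fundX}.
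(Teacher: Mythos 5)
Your proof is correct and follows essentially the same route as the paper: solve $x+y=1+u-v$, $xy=u$ via the quadratic whose discriminant is exactly $\Delta(u,v)\ge 0$, and take $\fp=((0,0),(\infty,\infty),(x,y),(1,1))$. You in fact supply more detail than the paper does on why $u,v\in\R_*$ guarantees $x,y\notin\{0,1\}$ and hence admissibility, which is a welcome addition but not a departure in method.
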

\begin{proof}
Since $\Delta=(1+u-v)^2-4u\ge 0$, there exist $x,y$ such that
$$
xy=u\quad\text{and}\quad x+y=1+u-v.
$$
In fact,
$$
x,y=\frac{1+u-v\pm\sqrt{\Delta}}{2}.
$$
Now one verifies that the admissible quadruple $\fp=(p_1,p_2,p_3,p_4)$ where
$$
p_1=(0,0),\quad p_2=(\infty,\infty),\quad p_3=(x,y),\quad p_4=(1,1),
$$
is the quadruple in question. The proof is complete.
\end{proof}

Let $\iota_0$ be the involution associated to the standard Circle $R_0$. Notice in the proof that the quadruple $\iota_0(\fp)$, that is,
$$
\iota_0(p_1)=(0,0),\quad \iota_0(p_2)=(\infty,\infty),\quad \iota_0(p_3)=(y,x),\quad \iota_0(p_4)=(1,1),
$$
also satisfies $\X_1(\iota_0(\fp))=u$ and $\X_2(\iota_0(\fp))=v$.

\begin{prop}
Suppose that $\fp$ and $\fp'$ are two quadruples in $\cC_4^\sharp(\T)$ such that
$$
\X_i(\fp)=\X_i(\fp'),\quad i=1,2.
$$
Then one of the following cases occurs:
\begin{enumerate}
\item There exists a $g\in\cM(\T)$ such that $g(\fp)=\fp'$;
\item There exists a $g\in\overline{\cM(\T)}$ such that $g(\fp)=\fp'$.
\end{enumerate}
\end{prop}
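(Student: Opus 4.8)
The plan is to pass to a normal form using the partial triply-transitive action of $\cM(\T)$ on triples of type a) established in Section \ref{sec:trans}, and then to compare the two normalised quadruples coordinate-by-coordinate.

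First I would normalise each quadruple. Since $\fp$ is admissible, $(p_1,p_2,p_4)$ is a triple of type a), so there is a $g\in\cM(\T)$ with $g(p_1)=(0,0)$, $g(p_2)=(\infty,\infty)$, $g(p_4)=(1,1)$; write $g(p_3)=(x,y)$. As the first (resp. second) coordinates of the four points of $\fp$ are pairwise distinct, $x$ and $y$ avoid $0,1,\infty$, so $x,y\in\R\setminus\{0,1\}$ are finite. Do the same for $\fp'$: choose $g'\in\cM(\T)$ normalising $(p_1',p_2',p_4')$ the same way and set $g'(p_3')=(x',y')$ with $x',y'\in\R\setminus\{0,1\}$.

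Next I would invoke the computation in the proof of Proposition \ref{prop:fundX}, which was carried out precisely for a quadruple in this normal form, together with the $\cM(\T)$-invariance of $\X_1,\X_2$: this gives $\X_1(\fp)=xy$, $\X_2(\fp)=(1-x)(1-y)$, and likewise $\X_1(\fp')=x'y'$, $\X_2(\fp')=(1-x')(1-y')$. The hypothesis $\X_i(\fp)=\X_i(\fp')$ then says $xy=x'y'$ and $(1-x)(1-y)=(1-x')(1-y')$; subtracting and using the first identity yields $x+y=x'+y'$. Hence $\{x,y\}$ and $\{x',y'\}$ are the root sets of the same monic quadratic $t^2-(x+y)t+xy$, so either $(x,y)=(x',y')$ or $(x,y)=(y',x')$.

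Finally I would translate each alternative back to a statement about $\fp$ and $\fp'$. If $(x,y)=(x',y')$ then $g(\fp)=g'(\fp')$, so $\fp'=(g')^{-1}g(\fp)$ with $(g')^{-1}g\in\cM(\T)$, which is case (1). If $(x,y)=(y',x')$, then since $\iota_0$ fixes $(0,0),(\infty,\infty),(1,1)$ and sends $(x,y)$ to $(y,x)=(x',y')$, we get $\iota_0\big(g(\fp)\big)=g'(\fp')$, hence $\fp'=(g')^{-1}\iota_0 g(\fp)$. Writing $(g')^{-1}\iota_0 g=\iota_0\cdot\big[(\iota_0 (g')^{-1}\iota_0)\,g\big]$ and using that conjugation by $\iota_0$ preserves $\cM(\T)=\cM(S^1)\times\cM(S^1)$ (it interchanges the two factors), the bracketed map lies in $\cM(\T)$, so $(g')^{-1}\iota_0 g\in\iota_0\,\cM(\T)=\overline{\cM(\T)}$, which is case (2). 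There is no genuine obstacle here; the only step needing care is this last one — that $(g')^{-1}\iota_0 g$ is really ``$\iota_0$ followed by an element of $\cM(\T)$'' rather than just some element of the group generated by $\cM(\T)$ and $\iota_0$ — and it is handled by the normality of $\cM(\T)$ in that group. (When $x=y$, i.e. $\fp$ lies on a Circle, the two alternatives coincide and only case (1) occurs.)
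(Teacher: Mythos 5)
Your proof is correct and follows essentially the same route as the paper: normalise both quadruples to $\bigl((0,0),(\infty,\infty),(x,y),(1,1)\bigr)$, read off $\X_1=xy$ and $\X_2=(1-x)(1-y)$, and conclude that $\{x,y\}=\{x',y'\}$ as root sets of the same quadratic. Your extra verification that $(g')^{-1}\iota_0 g$ genuinely lies in $\iota_0\,\cM(\T)$ (via normality of $\cM(\T)$ under conjugation by $\iota_0$) is a detail the paper leaves implicit, and is worth having.
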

\begin{proof}
We may normalise so that
$$
p_1=(0,0),\quad p_2=(\infty,\infty),\quad p_3=(x,y),\quad p_4=(1,1),
$$
and
$$
p_1'=(0,0),\quad p_2'=(\infty,\infty),\quad p_3'=(x',y'),\quad p_4'=(1,1).
$$
Then $\X_i(\fp)=\X_i(\fp'),\; i=1,2,$ implies
$$
xy=x'y'\quad \text{and}\quad (1-x)(1-y)=(1-x')(1-y').
$$
It follows that $xy=x'y'$ and $x+y=x'+y'$. But then either $x=x'$ and $y=y'$, or $x=y'$ and $y=x'$. The proof is complete.
\end{proof}

The above discussion boils down to the following theorem:
\begin{thm}\label{thm:F4}
The $\cM(\T)$-(resp. $\overline{\cM(\T)}$)-configuration space $\cF_4^\sharp=\cF_4^\sharp(\T)$ (resp. $\overline{\cF_4^\sharp}=\overline{\cF_4^\sharp(\T)}$) of admissible quadruples of points in the torus $\T$ is in a 2-1 (resp. 1-1) surjection with the set $\cP$ given in \eqref{eq:P}. The subset $\cF_4^{\sharp,0}$ of both $\cF_4^\sharp$ and $\overline{\cF_4^\sharp}$ comprising equivalence classes of quadruples of points in the same Circle is in a bijection with the subset of $\cP$ comprising $(u,v)$ such that
$$
\Delta=u^2+v^2-2u-2v+1-2uv=0.
$$
Explicitly, $\cP$ has three open components
\begin{align*}
\cP_1^0&=(-\infty, 0)\times(0,+\infty);\\
\cP_2^0&=(-\infty, 0)\times(-\infty,0);\\
\cP_3^0&=(0,+\infty)\times(-\infty,0);
\end{align*}
and three components with one-dimensional boundaries:
\begin{align*}
\cP_1^1&=\{(u,v)\in\cP\;|\;0<u<1,\; 0<v<1,\;\Delta\ge 0\};\\
\cP_2^1&=\{(u,v)\in\cP\;|\;u>1,\;v>0,\;\Delta\ge 0\};\\
\cP_3^1&=\{(u,v)\in\cP\;|\;u>0,\;v>1,\;\Delta\ge 0\}.
\end{align*}
\end{thm}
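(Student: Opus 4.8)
The plan is to assemble Theorem \ref{thm:F4} from the three preceding propositions together with an explicit analysis of how the image set $\cP$ decomposes into components. The first sentence is essentially already proved: the map $\cG^\sharp([\fp])=(\X_1(\fp),\X_2(\fp))$ takes values in $\cP$ by Proposition \ref{prop:fundX}, it is surjective onto $\cP$ by the second proposition, and the third proposition shows that its fibres are exactly the $\overline{\cM(\T)}$-orbits, hence pairs of $\cM(\T)$-orbits (an unordered pair, possibly a single orbit when $x=y$, i.e.\ on the boundary $\Delta=0$). So I would first restate these three facts and conclude the $2$-$1$ (resp.\ $1$-$1$) surjection, remarking that the fibre collapses to a single $\cM(\T)$-orbit precisely when $x=y$, which by the second statement of Proposition \ref{prop:fundX} is the locus $\Delta=0$; this also gives the claim about $\cF_4^{\sharp,0}$ being in bijection with $\{\Delta=0\}\cap\cP$.

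The remaining work is the explicit description of the components of $\cP$. Here I would use the normalisation $p_3=(x,y)$ with $\X_1=xy$, $\X_2=(1-x)(1-y)$ and the change of variables $s=x+y=1+\X_1-\X_2$, $m=xy=\X_1$, under which $\Delta=s^2-4m\ge0$ is exactly the condition that $x,y$ be real; since $\fp$ is admissible we also need $x,y\notin\{0,1\}$ and $x\ne y$ is not required (equality is the boundary). Translating $x\in(-\infty,0)\cup(0,1)\cup(1,+\infty)$ and likewise for $y$, the nine open cells of $(\R\setminus\{0,1\})^2$ in the $(x,y)$-plane map under $(x,y)\mapsto(xy,(1-x)(1-y))$ onto the pieces of $\cP$; cells symmetric under $x\leftrightarrow y$ have the same image, which is why the nine cells of Theorem \ref{thm:vec} become six pieces of $\cP$. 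I would track signs: $xy>0$ iff $x,y$ same sign, $(1-x)(1-y)>0$ iff $x,y$ on the same side of $1$, which pins down which of the three open cells $\cP_1^0,\cP_2^0,\cP_3^0$ (product of an open half-line and an open half-line, hence automatically inside $\{\Delta>0\}$) arises, and which cells give the ``mixed-sign'' regions $\cP_i^1$ that genuinely meet the parabola $\Delta=0$. The boundary $\Delta=0$ is a parabola tangent to the axes; I would note $u=0$ forces $(v-1)^2=0$, so the parabola meets the $v$-axis only at $(0,1)$, and symmetrically meets the $u$-axis only at $(1,0)$, which is what makes the three half-plane-product cells land in the open part and the other three cells land in the bounded-by-parabola part.

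The main obstacle is purely bookkeeping: correctly matching each of the nine $(x,y)$-cells to its image and verifying that the images are exactly the six sets $\cP_1^0,\cP_2^0,\cP_3^0,\cP_1^1,\cP_2^1,\cP_3^1$ as written, with no overlaps and no gaps. In particular one must check that the map $(x,y)\mapsto(xy,(1-x)(1-y))$ restricted to $\{x<y\}$ (a fundamental domain for the $\iota_0$-symmetry) is injective with the stated images — this is immediate from the third proposition, but one still has to confirm the component labels. I would finish by observing, as the paper already remarks, that the parametrisations by $\cQ$ (Theorem \ref{thm:band}) and by $\cP$ are related by the diffeomorphism $(\X(\fx),\X(\fy))\mapsto(\X(\fx)\X(\fy),(1-\X(\fx))(1-\X(\fy)))$ composed with the quotient by $x\leftrightarrow y$, so they induce the same differentiable structure on $\overline{\cF_4^\sharp(\T)}$.
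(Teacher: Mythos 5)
Your proposal is correct and follows essentially the same route as the paper, which obtains the theorem by simply assembling Proposition \ref{prop:fundX}, the surjectivity proposition and the injectivity-up-to-$\iota_0$ proposition (``the above discussion boils down to\dots''); your additional cell-by-cell bookkeeping in the normalisation $p_3=(x,y)$, $u=xy$, $v=(1-x)(1-y)$ supplies the verification of the component decomposition that the paper only asserts. One thing your ``no overlaps'' check would surface: the sets $\cP_2^1$ and $\cP_3^1$ as literally written intersect (e.g.\ in $\{u>1,\;v\ge(\sqrt{u}+1)^2\}$), and the actual connected components are $\cP_2^1=\{u>1,\;0<v\le(\sqrt{u}-1)^2\}$ (image of the cell $x,y>1$) and $\cP_3^1=\{u>0,\;v\ge(\sqrt{u}+1)^2\}$ (image of the cell $x,y<0$).
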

\begin{rem}
The change of coordinates
$$
u=xy,\quad v=(1-x)(1-y)=1-x-y+xy
$$
maps the set $\cQ$ of Theorem \ref{thm:band} onto the set $\cP$ in a bijective manner.
\end{rem}

\begin{figure}[h!]
\centering
\setlength{\unitlength}{1cm}
\begin{picture}(8,8)(-2,-2)
    % Axes
    \put(-2,0){\vector(1,0){7}}
    \put(0,-2){\vector(0,1){7}}
    \put(5,-0.4){$u$}
    \put(-0.4,5){$v$}
    
    % The Parabola Delta(u,v) = 0
    % Parametrized roughly by sqrt(u) + sqrt(v) = 1 (for first quadrant piece)
    \qbezier(1,0)(0.2,0.2)(0,1)
    
    % Labels for components
    \put(2,2){$\cP_1^1 \; (u>0, v>0, \Delta \ge 0)$}
    \put(-1.5,2){$\cP_1^0$}
    \put(-1.5,-1.5){$\cP_2^0$}
    \put(2,-1.5){$\cP_3^0$}
    
    % Dots for key points
    \put(1,0){\circle*{0.15}}
    \put(0,1){\circle*{0.15}}
    \put(1,-0.4){$1$}
    \put(-0.4,1){$1$}
\end{picture}
\caption{Schematic of the configuration space domain $\cP$. The boundary curve represents the parabola $\Delta(u,v)=0$, corresponding to quadruples lying on a Circle. The region inside the parabola ($0<u,v<1, \Delta<0$) is excluded.}
\label{fig:domainP}
\end{figure}

\section{M\"obius Structure}
Towards defining a M\"obius structure from the real cross-ratio $\X$ on the torus $\T$, we study first the case where both cross-ratios $\X_i(\fp)$, $i=1,2$, of an admissible quadruple of points are positive (Section \ref{sec:pos}). We then define $\fM_\T$ and prove in Section \ref{sec:mob} that it is not Ptolemaean.

\subsection{When both cross-ratios are positive}\label{sec:pos}
Let
$$
\cP^1=\cP_1^1\;\dot\cup\;\cP_2^1\;\dot\cup\;\cP_3^1.
$$
This set corresponds exactly to quadruples $\fp$ with both $\X_1(\fp)$ and $\X_2(\fp)$ positive. Equivalently, $\rX(\fx)$ and $\rX(\fy)$ belong to the same connected component of $\R\setminus\{0,1\}$, which means that the points of $\fx$ and $\fy$ have exactly the same type of ordering on the circle: If $x_1,x_2$ separate $x_3,x_4$, then also $y_1,y_2$ separate $y_3,y_4$ and so forth. Quadruples $\fp$ corresponding to $\cP^1$ have both $\X_1(\fp)$ and $\X_2(\fp)$ positive. Let $\cF^{\sharp,+}_4=(\cG^\sharp)^{-1}(\cP^1)$.

\begin{prop}\label{prop:Ptol-eq-T}
Let $\fp=(p_1,p_2,p_3,p_4)$ such that $[\fp]\in\cF^{\sharp,+}_4$ and let $\X_i=\X_i(\fp)$, $i=1,2$. Then
\begin{equation}\label{eq:X12}
\X_1^{1/2}+\X_2^{1/2}\ge 1\;\;\text{and}\;\;|\X_1^{1/2}-\X_2^{1/2}|\ge 1,\quad\text{or}\quad\X_1^{1/2}+\X_2^{1/2}\le 1\;\;\text{and}\;\;|\X_1^{1/2}-\X_2^{1/2}|\le 1.
\end{equation}
Moreover, $[\fp]\in\cF_4^{\sharp,0}$, that is, all points of $\fp$ lie in the same Circle if and only if
$$
\X_1^{1/2}+\X_2^{1/2}=1\quad \text{or}\quad |\X_1^{1/2}-\X_2^{1/2}|=1.
$$
Explicitly,
\begin{enumerate}
 \item $\X_1^{1/2}-\X_2^{1/2}=1$ if $p_1$ and $p_3$ separate $p_2$ and $p_4$;
 \item $\X_2^{1/2}-\X_1^{1/2}=1$ if $p_1$ and $p_2$ separate $p_3$ and $p_4$;
 \item $\X_1^{1/2}+\X_2^{1/2}=1$ if $p_1$ and $p_4$ separate $p_2$ and $p_3$.
\end{enumerate}
\end{prop}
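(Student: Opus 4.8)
The plan is to run everything through the normalisation already used for Proposition~\ref{prop:fundX}. Since $\fp$ is admissible, both $\fx$ and $\fy$ are quadruples of pairwise distinct points of $S^1$, so there is $g=(g_1,g_2)\in\cM(\T)$ with
$$
g(\fp)=\bigl((0,0),(\infty,\infty),(x,y),(1,1)\bigr),
$$
for suitable finite reals $x,y\notin\{0,1\}$; then $\X_1=xy$, $\X_2=(1-x)(1-y)$, and moreover $x=\rX(\fx)$, $y=\rX(\fy)$. The hypothesis $[\fp]\in\cF^{\sharp,+}_4$ is $\X_1>0$ and $\X_2>0$, i.e.\ $xy>0$ and $(1-x)(1-y)>0$, which together with $x,y\notin\{0,1\}$ forces $x$ and $y$ to lie in the \emph{same} one of the intervals $(-\infty,0)$, $(0,1)$, $(1,\infty)$ --- the pattern $x,y<0$ and $1-x,1-y<0$ being impossible --- in accordance with the remark preceding the statement. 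Put $s=\X_1^{1/2}\ge 0$ and $t=\X_2^{1/2}\ge 0$.

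Then I would split into the three intervals. If $x,y\in(0,1)$, then $s=\sqrt x\,\sqrt y$ and $t=\sqrt{1-x}\,\sqrt{1-y}$, and Cauchy--Schwarz applied to $(\sqrt x,\sqrt{1-x})$ and $(\sqrt y,\sqrt{1-y})$ gives $s+t\le\sqrt{x+(1-x)}\,\sqrt{y+(1-y)}=1$, with equality exactly when these vectors are proportional, i.e.\ when $x=y$; since also $0<s,t<1$ we get $|s-t|<1$, so this is the ``$\le 1$'' alternative of (\ref{eq:X12}). If $x,y\in(1,\infty)$, then $s=\sqrt{xy}>1$ and the asserted bound $\X_1^{1/2}-\X_2^{1/2}\ge 1$, namely $\sqrt{xy}-1\ge\sqrt{(x-1)(y-1)}$ (both sides nonnegative), is after squaring equivalent to $x+y\ge 2\sqrt{xy}$, i.e.\ AM--GM, with equality iff $x=y$; hence $|s-t|=s-t\ge 1$ and $s+t>1$, the ``$\ge 1$'' alternative. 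The case $x,y\in(-\infty,0)$ is symmetric: $t=\sqrt{(1-x)(1-y)}>1$ and $\X_2^{1/2}-\X_1^{1/2}\ge 1$ reduces after squaring to $(1-x)+(1-y)\ge 2\sqrt{(1-x)(1-y)}$, again AM--GM with equality iff $x=y$.

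For the remaining assertion, Proposition~\ref{prop:fundX} already says that all points of $\fp$ lie on a Circle precisely when $\Delta(\X_1,\X_2)=0$, which in the normalisation above means $x=y$; and the three computations just made show that this is equivalent to exactly one of $s+t=1$, $s-t=1$, $t-s=1$, according as the common value $x=y$ lies in $(0,1)$, $(1,\infty)$, or $(-\infty,0)$. When $x=y$ the four points sit on the standard Circle $R_0$, and their separation type is read from $\fx=(0,\infty,x,1)$ via cases (1)--(3) of the Introduction: $x\in(1,\infty)$ means $p_1,p_3$ separate $p_2,p_4$, $x\in(-\infty,0)$ means $p_1,p_2$ separate $p_3,p_4$, and $x\in(0,1)$ means $p_1,p_4$ separate $p_2,p_3$; matching these with $s-t=1$, $t-s=1$, $s+t=1$ respectively yields the explicit list.

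The three inequalities are all AM--GM or Cauchy--Schwarz with transparent equality cases, so I do not anticipate a genuine obstacle there. The delicate point is the case bookkeeping: one has to use the ``same component'' hypothesis both to discard the spurious fourth sign pattern and to guarantee that every square root that appears (of $xy$, of $(1-x)(1-y)$, of $(x-1)(y-1)$, etc.) is of a nonnegative quantity, and one must verify that the three equality sub-cases really do exhaust $\cF_4^{\sharp,0}$ and align with the stated separation patterns; this is the only step where a sign error would matter.
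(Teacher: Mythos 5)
Your proof is correct, but it takes a different route from the paper's. The paper derives the dichotomy (\ref{eq:X12}) in one line by factoring the fundamental quantity of Proposition \ref{prop:fundX}:
$$
\Delta(\X_1,\X_2)=(\X_1+\X_2-1)^2-4\X_1\X_2=\left((\X_1^{1/2}+\X_2^{1/2})^2-1\right)\left((\X_1^{1/2}-\X_2^{1/2})^2-1\right)\ge 0,
$$
so that under positivity of $\X_1,\X_2$ the two factors must have the same sign; it then leaves the equality cases and the separation patterns ``to the reader.'' You never write this factorization; instead you re-normalise, use the same-component observation to split into the three intervals $(-\infty,0)$, $(0,1)$, $(1,\infty)$, and prove each alternative directly by Cauchy--Schwarz or AM--GM with explicit equality conditions. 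The paper's identity is slicker and shows the dichotomy follows formally from $\Delta\ge 0$ alone, without returning to the normal form; your version costs three cases but actually supplies the part the paper omits --- that equality forces $x=y$, hence a Circle, and which of the three equalities occurs for which separation type (note in passing that your matching confirms the intended reading of case (2) in the Introduction as $\rX(\fx)>1$, which is misprinted there as $\rX(\fx)<1$). Both arguments rest on the same normalisation $\X_1=xy$, $\X_2=(1-x)(1-y)$, so the difference is one of packaging rather than substance, but your write-up is the more complete proof of the full statement.
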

\begin{proof}
From the fundamental inequality for cross-ratios (Proposition \ref{prop:fundX}) we have
\begin{align*}
0&\le\X_1^2+\X_2^2-2\X_1-2\X_2+1-2\X_1\X_2\\
&=(\X_1+\X_2-1)^2-4\X_1\X_2\\
&=(\X_1+\X_2-2\X_1^{1/2}\X_2^{1/2}-1)(\X_1+\X_2+2\X_1^{1/2}\X_2^{1/2}-1)\\
&=\left((\X_1^{1/2}+\X_2^{1/2})^2-1\right)\left((\X_1^{1/2}-\X_2^{1/2})^2-1\right),
\end{align*}
and this proves \eqref{eq:X12}. The details of the proof of the last statement are left to the reader.
\end{proof}

\subsection{M\"obius structure}\label{sec:mob}
From the real cross-ratio $\X:\cC^\sharp_4(\T)\to\R$ we define a positive cross-ratio $\bX:\cC^\sharp_4(\T)\to\R_+$ by setting
$$
\bX(\fp)=|\X(\fp)|^{1/2},
$$
for each $\fp\in\cC_4^\sharp$. The positive cross-ratio is invariant under $\cM(\T)$. The M\"obius structure on $\T$ associated to $\bX$ and restricted to $\cC_4^\sharp$ is the map
$$
\fM_\T:\cC^\sharp_4(\T)\ni\fp\mapsto(\bX_1(\fp),\bX_2(\fp)).
$$
%Recall that $(S,\rho)$ is a pseudo-semi-metric space if $\rho:S\times S\to\R_+$ satisfies a) $x=y \implies \rho(x,y)=0$, and b) $\rho(x,y)=\rho(y,x)$, for all $x,y\in S$. The M\"obius structure $\fM_\T$ is associated to the pseudo-semi-metric $\rho:\T\times\T\to\R_+$, given by
%$$
%\rho\left((x_1,y_1),(x_2,y_2)\right)=|x_1-x_2|^{1/2}\cdot|y_1-y_2|^{1/2},
%$$
%for each $(x_1,y_1)$ and $(x_2,y_2)$ in $\T$. In Section \ref{sec:cons} we explain the reason why we cannot have a natural positive cross-ratio compatible with the group action that is associated to any metric on $\T$. The following corollary concerning $\fM_\T$ follows directly from Proposition \ref{prop:Ptol-eq-T}.
%\bigskip
Recall that $(S,\rho)$ is a pseudo-semi-metric space if $\rho:S\times S\to\R_+$ satisfies a) $x=y \implies \rho(x,y)=0$, and b) $\rho(x,y)=\rho(y,x)$, for all $x,y\in S$. The Möbius structure $\fM_\T$ is associated to the pseudo-semi-metric $\rho:\T\times\T\to\R_+$, given by
$$
\rho\left((x_1,y_1),(x_2,y_2)\right)=|x_1-x_2|^{1/2}\cdot|y_1-y_2|^{1/2},
$$
for each $(x_1,y_1)$ and $(x_2,y_2)$ in $\T$. 

It is instructive to contrast this with the rank-1 setting. In boundaries of rank-1 symmetric spaces, such as the Heisenberg group bounding $\bH_\C^2$, the natural Möbius structure is associated to the Korányi gauge, which respects a generalised triangle inequality and is strictly Ptolemaean. In the rank-2 setting of $\T$, the metric $\rho$ fails to be subadditive. Geometrically, this failure reflects the independent causal variations allowed along the product structure of $S^1 \times S^1$, which breaks the rigid Carnot-Carathéodory structure present in rank-1 boundaries. In Section \ref{sec:cons} we explain the representation-theoretic reason why we cannot have a natural positive cross-ratio compatible with the group action that is associated to any standard metric on $\T$. The following corollary concerning $\fM_\T$ follows directly from Proposition \ref{prop:Ptol-eq-T}.
\begin{cor}
The M\"obius structure $\fM_\T$ is not Ptolemaean. However, Circles are Ptolemaean circles for $\fM_\T$.
\end{cor}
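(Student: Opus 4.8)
The plan is to read off everything from Proposition \ref{prop:Ptol-eq-T}, which has already done the substantive work. First I would recall what "Ptolemaean" means here: the M\"obius structure $\fM_\T$ would be Ptolemaean if the pair $(\bX_1(\fp),\bX_2(\fp))$ satisfied, for \emph{every} admissible quadruple $\fp$, the two inequalities $|\bX_1(\fp)-\bX_2(\fp)|\le 1$ and $\bX_1(\fp)+\bX_2(\fp)\ge 1$ (the analogue of (\ref{eq:ptol})). Since $\bX_i(\fp)=|\X_i(\fp)|^{1/2}$, when $[\fp]\in\cF^{\sharp,+}_4$ we have $\bX_i(\fp)=\X_i(\fp)^{1/2}$, and Proposition \ref{prop:Ptol-eq-T} tells us precisely that on this set \emph{either} $\bX_1+\bX_2\ge 1$ together with $|\bX_1-\bX_2|\ge 1$, \emph{or} $\bX_1+\bX_2\le 1$ together with $|\bX_1-\bX_2|\le 1$. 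To defeat the Ptolemaean inequalities it therefore suffices to exhibit, in each of these two regimes, a single quadruple whose pair $(\bX_1,\bX_2)$ violates (\ref{eq:ptol}).

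Concretely, I would use the normal form $p_1=(0,0)$, $p_2=(\infty,\infty)$, $p_3=(x,y)$, $p_4=(1,1)$ from the proof of Proposition \ref{prop:fundX}, for which $\X_1=xy$ and $\X_2=(1-x)(1-y)$, and simply pick numerical values of $(x,y)$ in the appropriate component of $(\R\setminus\{0,1\})^2$. Choosing $x,y$ small and positive gives $\bX_1=(xy)^{1/2}$ small and $\bX_2=((1-x)(1-y))^{1/2}$ close to $1$, with $\bX_1+\bX_2$ strictly less than $1$ once $x,y$ are small enough — contradicting the second inequality in (\ref{eq:ptol}); this handles the $\X_1^{1/2}+\X_2^{1/2}\le 1$ regime. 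For the other regime, choosing $x,y$ slightly larger than $1$ (so both $xy$ and $(1-x)(1-y)$ are positive) makes $\bX_1$ slightly above $1$ and $\bX_2$ small, so $|\bX_1-\bX_2|$ can be made strictly greater than $1$, contradicting the first inequality. Thus $\fM_\T$ is not Ptolemaean. (One could instead phrase this without explicit numbers: Proposition \ref{prop:Ptol-eq-T} says the strict inequalities $|\bX_1-\bX_2|>1$ and $\bX_1+\bX_2<1$ each occur on a nonempty open subset of $\cP^1$, and each already contradicts one of the two Ptolemaean bounds.)

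For the second assertion, that Circles are nonetheless Ptolemaean circles for $\fM_\T$: by definition a Ptolemaean circle is a subset on which equality holds in (\ref{eq:ptol}). If all four points of $\fp$ lie on a Circle, then by the last statement of Proposition \ref{prop:Ptol-eq-T} we have $\X_1^{1/2}+\X_2^{1/2}=1$ or $|\X_1^{1/2}-\X_2^{1/2}|=1$, i.e. $\bX_1(\fp)+\bX_2(\fp)=1$ or $|\bX_1(\fp)-\bX_2(\fp)|=1$; in all three sub-cases enumerated there one of the two defining equalities of a Ptolemaean circle holds (and the complementary inequality then holds automatically, since on a Circle the quadruple is $\cM(\T)$-equivalent, via the involution $\iota_0$, to a quadruple on $R_0\cong S^1$ where the $S^1$-relations $\bX_1-\bX_2=\pm1$, $\bX_1+\bX_2=1$ of (\ref{eq:ptol}) hold). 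Hence every Circle is a Ptolemaean circle for $\fM_\T$.

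I do not expect a genuine obstacle here: the corollary is essentially a restatement of Proposition \ref{prop:Ptol-eq-T}. The only point requiring a little care is making sure the counterexamples to the Ptolemaean inequalities really lie in $\cF^{\sharp,+}_4$ (so that $\bX_i=\X_i^{1/2}$ and Proposition \ref{prop:Ptol-eq-T} applies) — this is why I pick $(x,y)$ with both $xy>0$ and $(1-x)(1-y)>0$, which places $(\X_1,\X_2)$ in one of the open components $\cP_1^1,\cP_2^1,\cP_3^1$ of $\cP^1$.
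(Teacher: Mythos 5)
Your proof is correct and follows the paper's route: the paper offers no separate argument, stating only that the corollary follows directly from Proposition \ref{prop:Ptol-eq-T}, and your write-up simply makes that deduction explicit by producing quadruples in each branch of the dichotomy that violate one of the two Ptolemaean inequalities, and by reading off the Circle statement from the proposition's equality case. One tiny slip: for the sum-violating example you must also take $x\neq y$ (if $x=y=t$ then $\bX_1+\bX_2=t+(1-t)=1$ exactly, since that quadruple lies on the standard Circle $R_0$), but your parenthetical remark that the strict inequalities hold on a nonempty open set already covers this.
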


\section{Application to Boundaries of ${\rm SO}_0(2,2)/{\rm SO}(2)\times{\rm SO}(2)$ and $AdS^3$}\label{sec:cons}
In this section we show how the torus appears as the Fürstenberg boundary of the symmetric space ${\rm SO}_0(2,2)/{\rm SO}(2)\times{\rm SO}(2)$ as well as the ideal boundary of the 3-dimensional anti-de Sitter space $AdS^3$. We refer to \cite{BJ} for compactifications of symmetric spaces, to \cite{B} for recent developments on M\"obius structures in Fürstenberg boundaries, and finally to \cite{D} for a comprehensive treatment of anti-de Sitter space and its relations to Hyperbolic Geometry.  

Let $\R^{2,2}=\R^4\setminus\{0\}$ be the real vector space of dimension 4 equipped with a non-degenerate, indefinite pseudo-hermitian form $\langle\cdot,\cdot\rangle$ of signature $(2,2)$. Such a form is given by a $4\times 4$ matrix with 2 positive and 2 negative eigenvalues.
Let $\bx=\begin{bmatrix} x_1 & x_2 &x_3 &x_4\end{bmatrix}^T$ and $\by=\begin{bmatrix} y_1 & y_2 &y_3 &y_4\end{bmatrix}^T$ be column vectors. The pseudo-hermitian form is then defined by
$$
\langle\bx,\by\rangle=x_1y_4-x_2y_3-x_3y_2+x_4y_1
$$
and it is given by the matrix
$$
J=\begin{pmatrix}
0&0&0&1\\
0&0&-1&0\\
0&-1&0&0\\
1&0&0&0
\end{pmatrix}.
$$
The isometry group of this pseudo-hermitian form is $G={\rm SO}_0(2,2)$. There is a natural identification of $G$ with ${\rm SL}(2,\R)\times{\rm SL}(2,\R)$, see \cite{GPP}; if
$$
A_1=\begin{pmatrix} a_1&b_1\\ c_1&d_1\end{pmatrix}\quad\text{and}\quad
A_2=\begin{pmatrix} a_2&b_2\\ c_2&d_2\end{pmatrix}\in{\rm SL}(2,\R),
$$
then the pair $(A_1,A_2)$ is identified to
$$
A=\begin{pmatrix}
a_1A_2^{-1}&b_1A_2^{-1}\\
c_1A_2^{-1}&d_1A_2^{-1}\end{pmatrix}\in{\rm SO}_0(2,2).
$$
The group $K={\rm SO}(2)\times{\rm SO}(2)$ is a maximal compact subgroup of $G$ and $X=G/K$ is a symmetric space of rank-2. The symmetric space $X$ is also realised as $\bH_\C^1\times\bH_\C^1$, where $\bH_\C^1=D$ is the Poincaré hyperbolic unit disc. The torus $\T=S^1\times S^1$ is the {\it maximal Fürstenberg boundary} $\mathbb{F}(X)$ of the symmetric space $X$. Recall that if $G$ is a connected semi-simple Lie group and $X=G/K$ is the associated symmetric space, then the maximal Fürstenberg boundary $\mathbb{F}(X)$ may be thought of as $G/P_0$, where $P_0$ is a minimal parabolic subgroup of $G$, see for instance \cite{BJ}. If $X$ is of rank $\ge 2$, then $\mathbb{F}(X)$ cannot be the whole boundary of any compactification of $X$. In particular, if $X=D\times D$, then $\mathbb{F}(X)={\rm SO}_0(2,2)/P_0$, $P_0=AN\times AN$ where $AN$ is the $AN$ group in the Iwasawa $KAN$ decomposition of ${\rm SL}(2,\R)$. In this manner $\mathbb{F}(X)$ is just the corner of the boundary $(\overline{D}\times S^1)\cup(S^1\times\overline{D})$ of the compactification $\overline{D}\times\overline{D}$ of $X$. 

A rather neat way to represent $\T=\F(X)$ is via its isomorphism to the ideal boundary of anti-de Sitter space, which is obtained as follows:
For the pseudo-hermitian product there are subspaces of positive (space-like) vectors $V_+$, of null (light-like) vectors $V_0$, and of negative (time-like) vectors $V_-$:
\begin{align*}
V_+&=\left\{\bx\in\R^{2,2}\;|\;\langle\bx,\bx\rangle> 0\right\},\\
V_0&=\left\{\bx\in\R^{2,2}\;|\;\langle\bx,\bx\rangle=0\right\},\\
V_-&=\left\{\bx\in\R^{2,2}\;|\;\langle\bx,\bx\rangle<0\right\}.
\end{align*}
If $\lambda$ is a non-zero real, then $\langle\lambda\bx,\lambda\bx\rangle=\lambda^2\langle\bx,\bx\rangle$. Therefore $\lambda\bx$ is positive, null or negative if and only if $\bx$ is positive, null or negative, respectively. Let $P$ be the projection map from $\R^{2,2}$ to projective space $P\R^3$. The {\it projective model of anti-de Sitter space} $AdS^3$ is now defined as the collection of negative vectors $PV_-$ in $P\R^3$ and its {\it ideal boundary} $\partial_\infty AdS^3$ is defined as the collection $PV_0$ of null vectors. Anti-de Sitter space $AdS^3$ carries a natural Lorentz structure; the isometry group of this structure is the projectivisation of the set ${\rm SO}(2,2)$ of unitary matrices for the pseudo-hermitian form with matrix $J$, that is, ${\rm PSO}_0(2,2)={\rm SO}(2,2)/\{\pm I\}$; here $I$ is the identity $4\times 4$ matrix. From the discussion above we have that ${\rm PSO}_0(2,2)$ is identified to ${\rm PSL}(2,\R)^2=\cM(\T)$. Now the identification of $\partial_\infty AdS^3$ with the torus $\T=\F(X)$ is given in terms of the Segre embedding $\cS:\R P^1\times\R P^1\to\R P^3$. Recall that in homogeneous coordinates the Segre embedding $w=\cS(x,y)$ is defined by 
$$
(\bx,\by)=\left(\begin{bmatrix} x_1\\ x_2 \end{bmatrix}\;,\; \begin{bmatrix} y_1\\ y_2 \end{bmatrix}\right)\mapsto \bw=\begin{bmatrix} x_1y_1\\ x_1y_2\\ x_2y_1\\ x_2y_2 \end{bmatrix}.
$$ 
Notice that $\bw$ is a null vector. The action of the isometry group ${\rm PSO}_0(2,2)={\rm PSL}(2,\R)^2$ of $AdS^3$ is extended naturally on the ideal boundary $\partial_\infty AdS^3$, which in this manner is identified to $\T$. 

We stress at this point that in contrast with the case of hyperbolic spaces, there are distinct points in $\partial_\infty AdS^3$ which may be orthogonal. To see this, let $p=\cS(x,y)$ and $p'=\cS(x',y')$ be any distinct points; if 
$$
\bx=\begin{bmatrix} x_1\\x_2\end{bmatrix},\quad
\by=\begin{bmatrix} y_1\\y_2\end{bmatrix},\quad
\bx'=\begin{bmatrix} x_1'\\x_2'\end{bmatrix},\quad
\by'=\begin{bmatrix} y'_1\\y'_2\end{bmatrix},
$$
then  
$$
\bp=\begin{bmatrix}x_1y_1\\ x_1y_2\\ x_2y_1\\ x_2y_2 \end{bmatrix},\quad\bp'=\begin{bmatrix} x_1'y_1'\\ x_1'y_2'\\ x_2'y_1'\\ x_2'y_2'\end{bmatrix}.
$$
One then calculates
$$
\langle\bp,\bp'\rangle=(x_1x_2'-x_2x_1')(y_1y_2'-y_2y_1').
$$
Therefore $\langle\bp,\bp'\rangle=0$ if and only if $x=x'$ or $y=y'$. For fixed $p\in\partial_\infty AdS^3$, $p=\cS(x,y)$, the locus 
$$
p^c=\{\cS(x,z)\;|\;z\in S^1\}\cup \{\cS(z,y)\;|\;z\in S^1\}\equiv (\{x\}\times S^1)\cup(S^1\times\{y\})
$$ 
comprises points of the ideal boundary which are orthogonal to $p$. We call $p^c$ the {\it cross-completion} of $p$; transferring the picture to the context of a point $p=(x,y)$ lying on $\T$, orthogonal points are all points of $(\{x\}\times S^1)\cup(S^1\times\{y\})$. This orthogonality condition yields a direct Lorentzian interpretation of the 77-component orbit stratification of $\cF_4(\T)$ derived in Theorem \ref{thm:vec}. In the language of $AdS^3$ geometry, non-admissible quadruples correspond to configurations where at least two points are lightlike separated. For instance, if $\fp \in \cC_4(\T)$ is a quadruple where $x_i = x_j$ or $y_i = y_j$, the corresponding ideal points $\cS(x_i, y_i)$ and $\cS(x_j, y_j)$ lie on the same generator of the lightcone at infinity. The lower-dimensional orbit components detailed in Table \ref{table:1} thus perfectly classify the moduli of causal, lightlike degenerations of four points on the ideal boundary $\partial_\infty AdS^3$. The ideal boundary $\partial_\infty AdS^3$ may thus be thought of as the union of the cross-completion of $\infty=\cS(\infty,\infty)$ and the remaining region of the torus which we denote by $N$. The set $N$ comprises points $p=\cS(x,y)$, $x,y\neq \infty$, with standard lifts
$$
\bp=\begin{bmatrix}xy&x&y&1\end{bmatrix}^T,
$$ 
and can be viewed as the saddle surface $x_1=x_2x_3$ embedded in $\R^3$. But actually, $N$ admits a group structure: First, if $p=\cS(x,y)\in N$, we call $(x,y)$ the $N$-coordinates of $p$. To each such $p$ we assign the matrix
$$
T(x,y)=\begin{pmatrix}
1&y&x&xy\\
0&1&0&x\\
0&0&1&y\\
0&0&0&1
\end{pmatrix},
$$
whose projectivisation gives an element of ${\rm PSO}_0(2,2)$ in the unipotent isotropy group of $\infty$. Note that if $G$ is the isomorphism ${\rm SL}(2,\R)^2\to{\rm SO}_0(2,2)$, and $KAN$ is the Iwasawa decomposition of ${\rm SL}(2,\R)$, then $T(x,y)$ lies in the image $G(N,N)$. It is straightforward to verify that $T(x,y)$ leaves the cross-completion $\infty^c$ of infinity invariant and maps $o=\cS(0,0)$ to $p$. 

Also, for $p=(x,y)$ and $p'=(x',y')$, we have
$$
T(x,y)T(x',y')=T(x+x',y+y'),\quad \left(T(x,y)\right)^{-1}=T(-x,-y).
$$
Thus $T$ is a group homomorphism from $\R^2$ to ${\rm PSO}_0(2,2)$ with group law
$$
(x,y)\star(x',y')=(x+x',y+y').
$$
In other words, $N$ admits the structure of the additive group $(\R^2,+)$. The natural Euclidean metric $e:N\times N\to\R_+$ where
$$
e\left((x,y),(x',y')\right)=((x-x')^{2}+(y-y')^2)^{1/2},
$$ 
is invariant under the left action of $N$, but its similarity group is not ${\rm PSO}_0(2,2)$. To see this, consider
$$
D_\delta=\begin{pmatrix} \delta&0\\ 0&1/\delta\end{pmatrix}\quad \text{and}\quad
D_{\delta'}=\begin{pmatrix} \delta'&0\\ 0&1/\delta'\end{pmatrix},\quad\delta,\delta'>0.
$$ 
Then $G(D_\delta,D_{\delta'})(\xi_1,\xi_2)=(\delta^2\xi_1,(1/\delta')^2\xi_2)$, and $A=G(D_\delta,D_{\delta'})$ does not scale $e$ unless $\delta\delta'=1$, which is not always the case. Since all metrics in $\R^2$ are equivalent to $e$, there is no natural metric in $N$ whose similarity group equals ${\rm PSO}_0(2,2)$. In contrast, we define a function $a:N\to\R$,
$$
a(x,y)=xy
$$
and a gauge
$$
\|(x,y)\|=|a(x,y)|^{1/2}=|x|^{1/2}|y|^{1/2}.
$$
Essentially, we are mimicking here Korányi and Reimann and their construction for the Heisenberg group case, see \cite{KR}. The pseudo-semi-metric $\rho:N\times N\to\R_+$ is then defined by
$$
\rho\left((x,y),(x',y')\right)=\|(x',y')^{-1}\star(x,y)\|=|x-x'|^{1/2}|y-y'|^{1/2}.
$$ 
Let $\overline{N}=N\cup\{\infty\}$. The set $\cC_4^\sharp(\T)$ of admissible quadruples is actually the set $\cC_4^\sharp(\overline{N})$ of quadruples of points of $\overline{N}$ such that none of these points belongs to the cross-completion of any other point in the quadruple. The configuration space $\cF_4^\sharp(\T)$ is thus identified to $\cC_4^\sharp(\overline{N})$ cut by the diagonal action of ${\rm PSO}_0(2,2)$ and the configuration space $\overline{\cF_4^\sharp(\T)}$ is $\cC_4^\sharp(\overline{N})$ cut by the diagonal action of $\overline{{\rm PSO}_0(2,2)}$, which comprises elements of ${\rm PSO}_0(2,2)$ followed by the involution $\iota_0:(x,y)\mapsto(y,x)$ of $\overline{N}$. 

The real cross-ratio $\X$ is thus defined in $\cC_4^\sharp(\overline{N})$ by
$$
\X(\fp)=\frac{a(p_4\star p_2^{-1})\cdot a(p_3\star p_1^{-1})}{a(p_4\star p_1^{-1})\cdot a(p_3\star p_2^{-1})},
$$
for each $\fp\in\cC_4^\sharp(\overline{N})$. The positive cross-ratio is
$$
\bX(\fp)=\frac{\rho(p_4,p_2)\cdot\rho(p_3,p_1)}{\rho(p_4,p_1)\cdot\rho(p_3,p_2)}.
$$
The results of the previous section apply now immediately.

\end{document}